\documentclass{svjour3}
\usepackage{chngcntr}
\usepackage{amsmath}
\usepackage{amssymb}

\usepackage{amsthm}

\usepackage{array}
\usepackage{hyperref}

\usepackage{enumitem}

\usepackage{pdflscape}

\usepackage{caption}

\usepackage{mathptmx}
\usepackage{helvet}
\usepackage{courier}

\usepackage{geometry}
\geometry{
    paperheight = 11in,
    paperwidth  = 8.5in,
    left   = 1.0in,
    right  = 1.0in,
    top    = 1.0in,
    bottom = 1.0in
}




\spnewtheorem{theorem}{Theorem}[section]{\bfseries}{\itshape}
\counterwithin{theorem}{section}
\spnewtheorem{corollary}[theorem]{Corollary}{\bfseries}{\itshape}
\spnewtheorem{lemma}[theorem]{Lemma}{\bfseries}{\itshape}
\spnewtheorem{proposition}[theorem]{Proposition}{\bfseries}{\itshape}
\spnewtheorem{definition}[theorem]{Definition}{\bfseries}{\itshape}
\spnewtheorem{assumption}[theorem]{Assumption}{\bfseries}{\itshape}
\spnewtheorem{remark}[theorem]{Remark}{\bfseries}{\itshape}



\setlength{\textwidth}{\dimexpr\pdfpagewidth-2in}

\def\Ncantuba{{\mathbb N}}
\def\Fcantuba{{\mathbb F}}
\def\Zcantuba{{\mathbb Z}}

\def\lbrackCantuba{\left[}
\def\rbrackCantuba{\right]}
\def\adCantuba{{\rm ad}\ }

\def\algebHcantuba{\mathcal{H}}
\def\HeisenCantuba{\algebHcantuba(q)}

\def\HLieCantuba{\mathfrak{L}}
\def\HeisenCantubaLie{\HLieCantuba(q)}

\def\LieABcantuba{\lbrackCantuba A,B\rbrackCantuba}
\def\algIcantuba{I}
\def\qHeisenCantubaLieCantuba{{\overline{\HLieCantuba}(q)}}
\def\qHeisenCantuba{{\overline{\algebHcantuba}(q)}}

\def\baseAcantuba{\alpha}
\def\baseBcantuba{\beta}
\def\baseGcantuba{\gamma}

\def\obaseAcantuba{\overline{\baseAcantuba}}
\def\obaseBcantuba{\overline{\baseBcantuba}}
\def\obaseGcantuba{\overline{\baseGcantuba}}

\def\torsionHeisenCantuba{\algebHcantuba_p}
\def\torsionLieCantuba{\HLieCantuba_p}

\def\preHLieCantuba{\mathfrak{K}_p}

\def\derLieCantuba{\mathfrak{M}_p}

\def\gradsubCantuba{H}

\def\nonLiesubCantuba{\Gamma}

\def\Zcantubamod{\overline}



\setlength\extrarowheight{5pt}

\begin{document}

\title{Torsion-type $q$-deformed Heisenberg algebra and its Lie polynomials}

\author{Rafael Reno Cantuba, Sergei Silvestrov}

\institute{Rafael Reno Cantuba
\at Mathematics and Statistics Department, De La Salle University, Manila, 2401 Taft Ave., Malate, Manila, 1004 Metro Manila, Philippines. \\ \email{rafael\_cantuba@dlsu.edu.ph}
\and
Sergei Silvestrov
\at Division of Applied Mathematics, School of Education, Culture and Communication (UKK),
M\"alardalen University, Box 883, 721 23 V\"aster\aa s, Sweden.
\email{sergei.silvestrov@mdh.se}}


\date{}

\maketitle


\abstract{ Given a scalar parameter $q$, the $q$-deformed Heisenberg algebra $\HeisenCantuba$ is the unital associative algebra with  two generators $A,B$ that satisfy the $q$-deformed commutation relation $AB-qBA=\algIcantuba$, where $\algIcantuba$ is the multiplicative identity. For $\HeisenCantuba$ of torsion-type, that is if $q$ is a root of unity, characterization is obtained for all the Lie polynomials in $A,B$ and basis and graded structure and commutation relations for associated Lie algebras are studied.
\keywords{$q$-deformed Heisenberg algebra \and Lie polynomial \and commutator \and root of unity \and torsion-type}
\subclass{17B60 \and 16S15 \and 81R50 \and 05A30}}



\section{Introduction}

The main objects considered in this article are the $q$-deformed
Heisenberg algebras $\HeisenCantuba$, the parametric family of unital
associative algebras with two generators and defining commutation
relations
\begin{equation} \label{rel:qHeis}
  AB-qBA=I \text{.}
\end{equation}
When \(q=1\), this relation becomes \(AB-BA=I\), which is satisfied by the linear operators $A= \frac{d}{dx}: f(x) \mapsto f'(x)$ and $B=M_x:f(x)\mapsto xf(x)$ acting on suitable invariant linear spaces such as the linear spaces of all polynomials or all formal power series in an indeterminate $x$ with complex or real coefficients, or differentiable real-valued or complex-values functions in a real variable $x$ with usual definitions of derivative operator from calculus.
This follows from Leibnitz rule
\(\frac{d}{dx}\bigl( x \cdot f(x) \bigr) = x \frac{d}{dx} f(x) + \left( \frac{d}{dx} x \right) f(x)\).
Up to a constant scaling factor (involving Planck's constant) this is the Heisenberg
canonical commutation relation of Quantum Mechanics.
The $q$-deformed Heisenberg algebras are important in modern Quantum Physics and Mathematics
(see \cite{Hel} and references there). In noncommutative geometry, and investigations on quantum groups and quantum spaces,
the $q$-deformed Heisenberg algebras appears as one of the key examples and as a building block for other non-commutative objects.
In the calculus of $q$-difference operators and in $q$-difference equations---a subject whose history goes back well over one and a
half century, to Euler and Jackson---the $q$-deformed Heisenberg commutation relation plays the same fundamental role as the undeformed commutation relation in the
differential calculus and differential equations. Partly thanks to this, the most efficient way of obtaining many central results in
$q$-combinatorics and the theory of $q$-special functions is to make use of $q$-deformed Heisenberg algebras and their representations.

Whenever the scalar field has characteristic $0$, the undeformed Heisenberg algebra ($q=1$) is a simple algebra in the sense that the only two-sided ideals are the zero ideal and the whole algebra. On the other hand, the $q$-deformed Heisenberg algebra for $q\neq 1$ is not simple and has many two-sided nontrivial ideals. If moreover $q\neq 1$ is a root of unity, then the center consisting of all elements commuting with all elements of the algebra is nontrivial, that is consists not only of constant multiples of the identity element
\cite{Hel,Hel05}.

Any associative algebra yields a Lie algebra when associative multiplication is replaced with the commutator as a Lie bracket
and any subset of the associative algebra generates either the whole Lie algebra or a proper Lie subalgebra.
If the associative algebra is infinite-dimensional, then the resulting Lie algebra can be either finite-dimensional or infinite-dimensional and can have in general complicated structure and properties as a Lie algebra. It is an important and interesting problem to characterize all elements of an associative algebra belonging to a
Lie subalgebra generated by a given subset of the associative algebra. Any element in the Lie subalgebra can be uniquely represented as a finite linear combination of elements in any basis of the Lie subalgebra, and so, for a given subset of the associative algebra, the description of the bases in the Lie subalgebra generated by this subset yields a description of all elements of the associative algebra belonging to the Lie subalgebra, as finite linear combinations of the basis elements. We show in this work this interesting property of the Lie algebra being studied that the elements can be represented in other ways than using linear combinations of iterated commutators of the generators.  This involves the fact that in noncommutative associative algebras defined by generators and relations, the elements are given often in the form of noncommutative polynomial expressions in generators inherited from the free algebra with proper identification of equal elements due to reductions following from commutation relations. In order to determine whether an element of the associative algebra belongs to the Lie subalgebra one needs to describe a basis in this Lie subalgebra and then determine whether and how this given element can be expressed as a linear combination of the elements of that basis or not. Thus, formulas of rewriting elements when possible using repeated commutators and linear combinations, starting from the initial generating set of the Lie subalgebra, are important as well.

The Lie subalgebras for $q$-deformed Heisenberg algebra $\HeisenCantuba$ when $q$ is not a root of unity has been considered in \cite{Can17} where especially the Lie subalgebra generated by generators $A$ and $B$ has been studied in more detail and in particular its bases and some properties have been described.
This paper is devoted to Lie subalgebras for $q$-deformed Heisenberg algebras $\HeisenCantuba$ when $q$ is a root of unity, which is a natural continuation of \cite{Can17}.
Specifically, for the Lie subalgebra generated by the generators $A$ and $B$ the basis is described in terms of $A$ and $B$, and thus a characterization of elements of the Lie subalgebra generated by them is achieved.
The formulas allowing to rewrite elements written in a standard normal form of $\HeisenCantuba$ in terms of these basis elements of this Lie subalgebra are obtained and some properties of the basis and graded structure of the Lie subalgebra are described.

\section{Preliminaries}

Let $\Fcantuba$ be a field, and let $\mathcal{A}$ be a unital associative $\Fcantuba$-algebra, which we simply call an algebra throughout. We turn $\mathcal{A}$ into a Lie algebra over $\Fcantuba$ with Lie bracket given by $\lbrackCantuba f,g\rbrackCantuba := fg-gf$ for all $f,g\in\mathcal{A}$. We reserve the term \emph{subalgebra} to refer to a subset of $\mathcal{A}$ that is also an algebra using the same operations, and so that this kind of a substructure under the algebra structure is distinguished from the corresponding substructure under the Lie algebra structure, we use the term \emph{Lie subalgebra} to mean a subset of $\mathcal{A}$ that is also a Lie algebra over $\Fcantuba$ under the same Lie algebra operations. We treat the terms \emph{ideal} and \emph{Lie ideal} similarly. However, the term \emph{derived algebra} refers to a Lie algebra structure, as we are not interested in any analogue of it in the associative structure. 

Given $f_1,f_2,\ldots,f_n\in\mathcal{A}$, we define the Lie subalgebra of $\mathcal{A}$ generated by $f_1,f_2,\ldots,f_n$ as the smallest Lie subalgebra $\mathcal{B}$ of $\mathcal{A}$ that contains $f_1,f_2,\ldots,f_n$; i.e., if $\mathcal{C}$ is a Lie subalgebra of $\mathcal{A}$, with $f_1,f_2,\ldots,f_n\in\mathcal{C}$ and that $\mathcal{C}$ is contained in $\mathcal{B}$, then $\mathcal{B}=\mathcal{C}$. The elements of $\mathcal{B}$ are called the \emph{Lie polynomials in $f_1,f_2,\ldots,f_n$}. Denote by $\algIcantuba$ the unity element of $\mathcal{A}$. Given any nonzero element $f$ of $\mathcal{A}$, we interpret $f^0$ as $\algIcantuba$. Given $f\in\mathcal{A}$ the linear map $\adCantuba f:\mathcal{A}\rightarrow\mathcal{A}$ is defined by $g\mapsto\lbrackCantuba f,g\rbrackCantuba$.


Fix a $q\in\Fcantuba$. The $q$-deformed Heisenberg algebra is the unital associative $\Fcantuba$-algebra $\HeisenCantuba$ generated by two elements $A,B$ satisfying the relation $AB-qBA=\algIcantuba$. By a simple application of \cite[Lemma 1.7]{Reut}, the relation $AB-qBA=\algIcantuba$ cannot be expressed in terms of only Lie polynomials in $A,B$ for all cases for $q$ considered in \cite{Can17} and also in this work.  Denote the set of all nonnegative integers by $\Ncantuba$, and the set of all positive integers by $\Zcantuba^+$.  If $q\notin\{0,1\}$, then by \cite[Corollary 4.5]{Hel05}, the elements 

\begin{eqnarray}\label{Heisenbasis}
\LieABcantuba^k,\quad\LieABcantuba^k A^l,\quad B^{l}\LieABcantuba^k, \quad (k\in\Ncantuba, l\in\Zcantuba^+),
\end{eqnarray}
form a basis for $\HeisenCantuba$.

Define $\{0\}_q:=0$, and for each $n\in\Zcantuba^+$, we recursively define $\{n\}_q:=1+q\{n-1\}_q$. That is, $\{n\}_q=1+q+\cdots+q^{n-1}$. If $q\neq 1$, then $\{n\}_q=\frac{1-q^n}{1-q}$. The \emph{Gaussian binomial coefficients} or \emph{$q$-binomial coefficients} are recursively defined by
\begin{eqnarray}
{{n}\choose{0}}_q & = & 1,\\
{{0}\choose{k+1}}_q & = & 0,\\
{{n+1}\choose{k+1}}_q & = & {{n}\choose{k}}_q + q^{k+1}{{n}\choose{k+1}}_q,\label{qbirec}
\end{eqnarray}
for any $n,k\in\Ncantuba$. These $q$-binomial coefficients satisy the symmetry property ${{n}\choose{k}}_q={{n}\choose{n-k}}_q$ for any $k\in\{0,1,\ldots,n\}$, and as a consequence also the properties
\begin{eqnarray}
{{n}\choose{0}}_q  =   {{n}\choose{n}}_q  & = &   1,\label{qbi1}\\
{{n}\choose{1}}_q =   {{n}\choose{n-1}}_q  & = &  \{n\}_q.\label{qbi2}
\end{eqnarray}

\subsection{Structure constants of the $q$-deformed Heisenberg algebra}\label{structconstSec}

If $\mathcal{A}$ is an algebra with basis $\{\beta_j\   :\   j\in J\}$ for some index set $J$, then it is worthwhile to know how the product of any two basis elements $\beta_j$ and $\beta_k$ can be expressed as a linear combination of $\{\beta_j\   :\   j\in J\}$, i.e., $\beta_j\beta_k=\sum_{i}e_i(j,k)\beta_i$ for some scalars $e_i(j,k)$. We refer to the scalars $e_i(j,k)$ as the \emph{structure constants} of the algebra $\mathcal{A}$ with respect to the basis $\{\beta_j\   :\   j\in J\}$ of $\mathcal{A}$. In this subsection, we illustrate how to obtain the structure constants of $\HeisenCantuba$ with respect to its basis \eqref{Heisenbasis}. From \cite[Equations~(18),(19),(39)]{Hel05} and from \cite[Proposition~3.3]{Can17}, an algorithm for expressing the product of any two elements in \eqref{Heisenbasis} as a linear combination of \eqref{Heisenbasis} can be completely determined using the relations

\begin{eqnarray}
A^l\LieABcantuba^k & = & q^{kl}\LieABcantuba^kA^l,\label{moveAi}\\
\LieABcantuba^kB^l & = & q^{kl} B^l\LieABcantuba^k,\label{moveBj}\\
A^lB^l & = &  (q-1)^{-l}\sum_{i=0}^l(-1)^{l-i}q^{{i+1}\choose{2}}{{l}\choose{i}}_q\LieABcantuba^i.\label{equalAlBl}\\
B^lA^l & = & q^{-{{l}\choose{2}}}(q-1)^{-l}\sum_{i=0}^l(-1)^{l-i}q^{{l-i}\choose{2}}{{l}\choose{i}}_q\lbrackCantuba A,B\rbrackCantuba^i.\label{equalBlAl}
\end{eqnarray}
All such relations are consequences of the simple relation $AB-qBA=\algIcantuba$. The basis elements from \eqref{Heisenbasis} are essentially of the following three types:

\begin{eqnarray}
\LieABcantuba^k, \label{typeLie}\\
\LieABcantuba^k A^l,\label{typeA0}\\
B^{l}\LieABcantuba^k, \label{typeB0}
\end{eqnarray}
where $k\in\Ncantuba, l\in\Zcantuba^+$. We discuss in this subsection products of two basis elements from \eqref{Heisenbasis} under different cases based on whether each factor is of the type \eqref{typeLie}, \eqref{typeA0}, or \eqref{typeB0}. The simplest cases are when we have a product of two basis elements both of type \eqref{typeLie}, the case when we have a basis element of type \eqref{typeLie} multiplied by a basis element of type \eqref{typeA0}, and the case when we have a basis element of type \eqref{typeB0} multiplied by a basis element of type \eqref{typeLie}. That is, if we let $m,k\in\Ncantuba$ and $n,l\in\Zcantuba^+$ to be arbitrary, we have
\begin{eqnarray}
\LieABcantuba^m\cdot\LieABcantuba^k & = & \LieABcantuba^{m+k},\label{simple1}\\
\LieABcantuba^m\cdot\LieABcantuba^kA^l & = & \LieABcantuba^{m+k}A^l,\label{simple2}\\
B^n\LieABcantuba^m\cdot\LieABcantuba^k & = & B^n\LieABcantuba^{m+k}.\label{simple3}
\end{eqnarray}

As we consider more complicated cases, the relations \eqref{moveAi} to \eqref{equalBlAl} will turn out to be useful. The relation \eqref{moveAi} is relevant when we have a product of a basis element of type \eqref{typeA0} multiplied by a basis element of type \eqref{typeLie}, and also in the case when we have a product of two basis elements of type \eqref{typeA0}. More explicitly, we have
\begin{eqnarray}
\LieABcantuba^mA^n\cdot\LieABcantuba^k & = & q^{nk}\LieABcantuba^{m+k}A^n,\label{medA1}\\
\LieABcantuba^mA^n\cdot\LieABcantuba^kA^l & = & q^{nk}\LieABcantuba^{m+k}A^{n+l}.\label{medA2}
\end{eqnarray}
For the case when we have a basis element of type \eqref{typeLie} multiplied by a basis element of type \eqref{typeB0}, and the case when we have a product of both with type \eqref{typeB0}, the relation \eqref{moveBj} can be used to obtain
\begin{eqnarray}
\LieABcantuba^m\cdot B^l\LieABcantuba^k & = & q^{ml}B^l\LieABcantuba^{m+k},\label{medB1}\\
B^n\LieABcantuba^m\cdot B^l\LieABcantuba^k & = & q^{ml}B^{n+l}\LieABcantuba^{m+k}.\label{medB2}
\end{eqnarray}

The remaining cases involve less simple computations involving the relations \eqref{equalAlBl}, \eqref{equalBlAl}. For notational convenience, for each $l\in\Zcantuba^+$ and each $i\in\{0,1,\ldots,l\}$ we define
\begin{eqnarray}
 c_i(l) & := & (q-1)^{-l}(-1)^{l-i}q^{{i+1}\choose{2}}{{l}\choose{i}}_q,\label{cidef}\\
 d_i(l) & := & q^{-{{l}\choose{2}}}(q-1)^{-l}(-1)^{l-i}q^{{l-i}\choose{2}}{{l}\choose{i}}_q,\label{didef}
\end{eqnarray}
and so the relations \eqref{equalAlBl}, \eqref{equalBlAl} can be rewritten as
\begin{eqnarray}
A^lB^l & = & \sum_{i=0}^lc_i(l)\cdot\LieABcantuba^i,\label{newAlBl}\\
B^lA^l & = & \sum_{i=0}^ld_i(l)\cdot\LieABcantuba^i,\label{newBlAl}
\end{eqnarray}
respectively. Consider the product $\LieABcantuba^mA^n\cdot B^l\LieABcantuba^k$.
If $n\geq l$, we make use of the fact that $\LieABcantuba^mA^n\cdot B^l\LieABcantuba^k=\LieABcantuba^mA^{n-l}(A^l B^l)\LieABcantuba^k$ where we replace $A^l B^l$ using \eqref{newAlBl} to obtain
\begin{eqnarray}
\LieABcantuba^mA^n\cdot B^l\LieABcantuba^k = \sum_{i=0}^lc_i(l)\cdot\LieABcantuba^m(A^{n-l}\LieABcantuba^{i+k}),\nonumber
\end{eqnarray}
and we rewrite $A^{n-l}\LieABcantuba^{i+k}$ using \eqref{moveAi} to obtain
\begin{eqnarray}
\LieABcantuba^mA^n\cdot B^l\LieABcantuba^k & = & \sum_{i=0}^lq^{(i+k)(n-l)}c_i(l)\cdot\LieABcantuba^{m+i+k}A^{n-l},\quad\quad(n\geq l).\label{ABnbigger}
\end{eqnarray}
If $n<l$, we rewrite the product as  $\LieABcantuba^mA^n\cdot B^l\LieABcantuba^k=\LieABcantuba^m(A^n B^n)B^{l-n}\LieABcantuba^k$, and replace $A^nB^n$ using \eqref{newAlBl}. This results to
\begin{eqnarray}
\LieABcantuba^mA^n\cdot B^l\LieABcantuba^k = \sum_{i=0}^nc_i(n)\cdot(\LieABcantuba^{m+i}B^{l-n})\LieABcantuba^{k},\nonumber
\end{eqnarray}
in which we rewrite $\LieABcantuba^{m+i}B^{l-n}$ using \eqref{moveBj}, and we get
\begin{eqnarray}
\LieABcantuba^mA^n\cdot B^l\LieABcantuba^k & = & \sum_{i=0}^nq^{(m+i)(l-n)}c_i(n)\cdot B^{l-n}\LieABcantuba^{m+i+k},\quad\quad(n< l).\label{ABnsmaller}
\end{eqnarray}
Similar computations can be used for the product $B^n\LieABcantuba^m\cdot\LieABcantuba^k A^l$. The relations that show the structure constants are the following.
\begin{eqnarray}
B^n\LieABcantuba^m\cdot\LieABcantuba^k A^l& = & \sum_{i=0}^lq^{-(m+k)l}d_i(l)\cdot B^{n-l}\LieABcantuba^{m+k+i},\quad\quad(n\geq l)\label{BAnbigger},\\
B^n\LieABcantuba^m\cdot\LieABcantuba^k A^l & = & \sum_{i=0}^nq^{-(m+k)n}d_i(n)\cdot \LieABcantuba^{i+m+k}A^{l-n},\quad\quad(n< l).\label{BAnsmaller}
\end{eqnarray}
We summarize in Table~\ref{table:structconst} the relations we have discussed in this subsection that give the structure constants of $\HeisenCantuba$ with respect to the basis \eqref{Heisenbasis}.\\
\begin{center}
\scalebox{0.73}{
\begin{tabular}{|c|c|c|c|}
\hline
$\cdot$ &  $\LieABcantuba^k$ & $\LieABcantuba^kA^l$ & $B^l\LieABcantuba^k$  \\
\hline
$\LieABcantuba^m$  & \eqref{simple1} & \eqref{simple2} & \eqref{medB1} \\
\hline
$\LieABcantuba^mA^n$  & \eqref{medA1} & \eqref{medA2} & \eqref{ABnbigger}, \eqref{ABnsmaller} \\
\hline
$B^n\LieABcantuba^m$  & \eqref{simple3} & \eqref{BAnbigger}, \eqref{BAnsmaller} & \eqref{medB2} \\
\hline
\end{tabular}}
\captionof{table}{Relations that can be used to obtain the structure constants of $\HeisenCantuba$ with respect to the basis \eqref{Heisenbasis} \\}\label{table:structconst}
\end{center}

The computational techniques described in this section are related to the $\Zcantuba$-gradation of $\HeisenCantuba$ from \cite[Corollary 4.5]{Hel05}. This $\Zcantuba$-gradation is the collection $\{\gradsubCantuba_n\  :\  n\in\Zcantuba\}$  of subspaces of $\HeisenCantuba$ such that for each $l\in\Zcantuba^+$, a basis for $\gradsubCantuba_l$ consists of all elements of the form $B^l\LieABcantuba^k$ ($k\in\Ncantuba$), and a basis for $\gradsubCantuba_{-l}$ consists of all elements of the form $\LieABcantuba^kA^l$ ($k\in\Ncantuba$). The subalgebra $\gradsubCantuba_0$ of $\HeisenCantuba$ has a basis consisting of all elements of the form $\LieABcantuba^k$ ($k\in\Ncantuba$).

\subsection{Lie polynomials when $q$ is not a root of unity}\label{notrootSec}
Denote by $\HeisenCantubaLie$ the Lie subalgebra of $\HeisenCantuba$ generated by $A,B$. Following the notation in \cite[Section 5]{Can17}, if $q$ is nonzero and not a root of unity, we denote $\HeisenCantuba$ by $\qHeisenCantuba$, and $\HeisenCantubaLie$ by $\qHeisenCantubaLieCantuba$. Then by \cite[Lemma 5.1]{Can} the elements in \eqref{Heisenbasis}, except any power of $A$ or $B$ with exponent not equal to $1$, can be expressed as elements of $\qHeisenCantubaLieCantuba$. Such elements are
\begin{eqnarray}\label{HeisenLiebasis}
A,\  B,\quad\LieABcantuba^k,\quad\LieABcantuba^k A^l,\quad B^{l}\LieABcantuba^k, \quad (k, l\in\Zcantuba^+).
\end{eqnarray}
Furthermore, the above elements form a basis for $\qHeisenCantubaLieCantuba$ \cite[Theorem 5.8]{Can17}. It was also shown in \cite{Can17} that $\qHeisenCantubaLieCantuba$ is a Lie ideal of $\qHeisenCantuba$. Thus, if we compute the Lie bracket of an element in \eqref{HeisenLiebasis} with an element in \eqref{Heisenbasis}, whether the latter is in \eqref{HeisenLiebasis} or not, then the result is a linear combination of \eqref{HeisenLiebasis}. A further result in \cite{Can17} is that the resulting quotient Lie algebra $\qHeisenCantuba/\qHeisenCantubaLieCantuba$ is one-step nilpotent. This implies that the Lie bracket of any two elements in \eqref{Heisenbasis} that are both not in \eqref{HeisenLiebasis} can be expressed uniquely as a linear combination of \eqref{HeisenLiebasis}. The Lie polynomials for the case $q=0$ are discussed in \cite[Section~4]{Can17}, while the case $q=1$ leads to a trivial low-dimensional Lie algebra as discussed in \cite[Section 1]{Can17}.

\section{Consequences of $q$ being a root of unity on structure constants and commutators}\label{ConSec}

Throughout, we assume that $q$ is a root of unity, and we denote by $p$ the least positive integer that satisfies the equation $q^p=1$.  As mentioned in Section~\ref{notrootSec}, we have a trivial Lie algebra of Lie polynomials when $q=1$, and so we further assume that $q\neq 1$, and hence $p\geq 2$. Following the terminology in \cite[Definition 5.2]{Hel}, the said conditions mean that $q$ is of \emph{torsion type with order} $p$. To remind us of these restrictions, we denote $\HeisenCantuba$ and $\HeisenCantubaLie$ by $\torsionHeisenCantuba$ and $\torsionLieCantuba$, respectively. We extend the terminology by calling $\torsionLieCantuba$ a \emph{torsion-type $q$-deformed Heisenberg algebra with order $p$}.

We make an important note here that the relation $AB-qBA=\algIcantuba$ still implies, by the Diamond Lemma \cite[Theorem 1.2]{Berg} and also by \cite[Theorem 3.1]{Hel} that the elements \eqref{Heisenbasis} still form a basis for $\torsionHeisenCantuba$.

Since $q\neq 1$, we have $\{n\}_q=\frac{1-q^n}{1-q}$. Let
\begin{eqnarray}
\Zcantubamod{n}=\min\{x\in\Ncantuba\  :\  x\equiv n\mod p\}.\label{remnot}
\end{eqnarray}
Then $n=Np+\Zcantubamod{n}$ for some integer $N$, and since $q^p=1$, we further have $\{n\}_q=\frac{1-q^{\Zcantubamod{n}}}{1-q}$, which is nonzero if $\Zcantubamod{n}\neq 0$ by the minimality of $p$. Thus,
\begin{eqnarray}
\{n\}_q = 0\quad\quad \Leftrightarrow\quad\quad n\equiv 0\mod p.\label{qmod}
\end{eqnarray}
Gvien $l\in\Zcantuba^+$ and $i\in\{1,2,\ldots,l\}$, the consequences on the $q$-binomial coefficient ${{l}\choose{i}}_q$ depends on comparison of $l$ with $p$. Suppose that $l<p$, by the properties of $q$-binomial coefficients, (see for instance \cite[p. 185]{Hel}) we have the identity
\begin{eqnarray}
{{l}\choose{i}}_q = \frac{(1-q^{l})(1-q^{l-1})\cdots(1-q^{l-i+1})}{(1-q)(1-q^{2})\cdots(1-q^{i})},\label{nonzerofactors}
\end{eqnarray}
where each factor $1-q^x$ in \eqref{nonzerofactors} satisfies $1\leq x<p$. By the minimality of $p$, we have ${{l}\choose{i}}_q \neq 0$ whenever $l<p$, $1\leq i<p$.

Suppose $l=p$. Then the numerator of \eqref{nonzerofactors} becomes zero, while the denominator is still nonzero because all the factors $1-q^y$ in there satisfy $1\leq y<p$. Then by further using \eqref{qbi1}, \eqref{qbi2}, \eqref{qmod}, we find that ${{p}\choose{i}}_q$ is $1$ if $i\in\{0,p\}$ and is $0$ if $i\in\{1,2,\ldots,p-1\}$. For the case $l=p+1$, we use \eqref{qbirec}, and by some routine computations, it can be shown that ${{p+1}\choose{i}}_q$ is also $1$ if $i\in\{0,p\}$ and $0$ if $i\in\{1,2,\ldots,p-1\}$. This can be routinely extended to any $l\geq p$ by induction. By these comments we have

\begin{eqnarray}
{{l}\choose{i}}_q & \neq & 0,\quad\quad\quad\   \  \  (l<p,\quad  i\in\{0,1,\ldots,l\}),\label{qbireduce1}\\
{{l}\choose{i}}_q & = & \left\{  \begin{array}{cc}
 &
    \begin{array}{cc}
      0, &  \quad\quad(l\geq p,\quad  i\in\{1,2,\ldots,l-1\}), \\
     1, &  (l\geq p,\quad  i\in\{0,l\}).
    \end{array}
\end{array}\right.\label{qbireduce2}
\end{eqnarray}

\subsection{Consequences on the structure constants}

We now discuss the consequences of the facts explained in the beginning of Section~\ref{ConSec} above about the structure constants of $\torsionHeisenCantuba$ with respect to the basis \eqref{Heisenbasis}. Recall that the structure constants can be obtained from the relations \eqref{simple1}--\eqref{medB2} and \eqref{ABnbigger}--\eqref{BAnsmaller}. Based on the appearance of \eqref{simple1}--\eqref{simple3}, the structure constants in these relations are not affected by $q$ being a root of unity. As for the relations \eqref{medA1}--\eqref{medB2}, the appearance of the structure constants suggests that we simply have to reduce the exponents to the smallest nonnegative representatives of these integers modulo $p$. We still use our notation \eqref{remnot} for this, and so \eqref{medA1}--\eqref{medB2} become

\begin{eqnarray}
\LieABcantuba^mA^n\cdot\LieABcantuba^k & = & q^{\Zcantubamod{nk}}\LieABcantuba^{m+k}A^n,\label{medA1p}\\
\LieABcantuba^mA^n\cdot\LieABcantuba^kA^l & = & q^{\Zcantubamod{nk}}\LieABcantuba^{m+k}A^{n+l},\label{medA2p}\\
\LieABcantuba^m\cdot B^l\LieABcantuba^k & = & q^{\Zcantubamod{ml}}B^l\LieABcantuba^{m+k},\label{medB1p}\\
B^n\LieABcantuba^m\cdot B^l\LieABcantuba^k & = & q^{\Zcantubamod{ml}}B^{n+l}\LieABcantuba^{m+k}.\label{medB2p}
\end{eqnarray}
As for the more complicated relations, we first note that using \eqref{qbi1}, \eqref{qbi2}, \eqref{cidef}, \eqref{didef}, we have
\begin{eqnarray}
c_0(l) & = & d_0(l) = (1-q)^{-l},\label{Reduce1}\\
c_l(l) & = & q^{{l+1}\choose{2}}(q-1)^{-l},\label{cup}\\
d_l(l) & = & q^{-{{l}\choose{2}}}(q-1)^{-l}.\label{dup}
\end{eqnarray}
Simple routine computations can be used to show that $p$ divides the integers ${l+1}\choose{2}$ and $-{{l}\choose{2}}$ whenever $l\geq p$, and so \eqref{cup}, \eqref{dup} can be simplified and we have the rule
\begin{eqnarray}
l\geq p \quad \quad& \Rightarrow & \quad\quad c_l(l) = (q-1)^{-l}=d_l(l).\label{Reduce2}
\end{eqnarray}
Also, if $i\in\{1,2,\ldots,p-1\}$, by \eqref{qbireduce1}, \eqref{qbireduce2} we have ${{l}\choose{i}}_q=0$. Then using \eqref{cidef}, \eqref{didef}, we have
\begin{eqnarray}
l\geq p,\   i\in\{1,2,\ldots,p-1\}\quad \quad& \Rightarrow & \quad\quad c_i(l) = 0 =d_i(l).\label{Reduce3}
\end{eqnarray}
By \eqref{Reduce1}, \eqref{Reduce2}, \eqref{Reduce3}, the relations \eqref{equalAlBl}, \eqref{equalBlAl} are now simplified into
\begin{eqnarray}
l\geq p \quad \quad& \Rightarrow & \quad\quad A^lB^l = \frac{\algIcantuba-(-1)^l\LieABcantuba^l}{(1-q)^l}=B^lA^l.\label{reduceAB}
\end{eqnarray}
The relation \eqref{reduceAB} can now be used to derive the parallels of \eqref{ABnbigger}--\eqref{BAnsmaller}. We simply make use of the same pattern of computations described in Section~\ref{structconstSec} except that we use \eqref{medA1p}--\eqref{medB2p} instead of \eqref{medA1}--\eqref{medB2}, and we use \eqref{reduceAB} instead of \eqref{equalAlBl}, \eqref{equalBlAl}. The resulting relations are

\begin{eqnarray}
\LieABcantuba^mA^n\cdot B^l\LieABcantuba^k & = &(1-q)^{-l}\left(q^{\Zcantubamod{(n-l)k}}\algIcantuba-(-1)^{l}q^{\Zcantubamod{(n-l)(m+k)}}\LieABcantuba^l\right)\LieABcantuba^{m+k}A^{n-l},\quad\quad(n\geq l\geq p),\label{ABnbiggerp}\\
\LieABcantuba^mA^n\cdot B^l\LieABcantuba^k & = &(1-q)^{-n}B^{l-n}\LieABcantuba^{m+k}\left(q^{\Zcantubamod{m(l-n)}}\algIcantuba-(-1)^{n}q^{\Zcantubamod{(l-n)(m+n)}}\LieABcantuba^n\right),\quad\  \  (l>n\geq p),\label{ABnsmallerp}\\
B^n\LieABcantuba^m\cdot\LieABcantuba^kA^l & = &q^{\Zcantubamod{l(m+k)}}(1-q)^{-l}B^{n-l}\LieABcantuba^{m+k}\left(\algIcantuba-(-1)^{l}\LieABcantuba^l\right),\quad(n\geq l\geq p),\label{BAnbiggerp}\\
B^n\LieABcantuba^m\cdot\LieABcantuba^kA^l & = &q^{\Zcantubamod{(m+k)n}}(1-q)^{-n}\left(\algIcantuba-(-1)^{n}\LieABcantuba^n\right)\LieABcantuba^{m+k}A^{l-n},\quad(l>n\geq p).\label{BAnsmallerp}
\end{eqnarray}
It is important to note here that the simplified relations \eqref{ABnbiggerp}--\eqref{BAnsmallerp} hold only if the respective inequalities indicated in each of them hold. Otherwise, the original relations \eqref{ABnbigger}--\eqref{BAnsmaller} still prevail.

\subsection{Consequences on the commutators of basis elements}

In this subsection, we discuss the consequences of $q$ being of torsion type on the commutator of two arbitrary basis elements of $\torsionHeisenCantuba$ from \eqref{Heisenbasis}.

\begin{definition}\label{nonLiesubDef} We define $\nonLiesubCantuba$ as the linear subspace of $\torsionHeisenCantuba$ spanned by the basis elements from \eqref{Heisenbasis} of the form
\begin{eqnarray}
B^{m}\LieABcantuba^{n}, \quad \LieABcantuba^{n}A^{m},\nonumber
\end{eqnarray}
where $m,n$ are both nonzero but both are not congruent to zero modulo $p$.
\end{definition}

In all future references, by a basis element of $\nonLiesubCantuba$, we refer to the basis of $\nonLiesubCantuba$ given in Definition~\ref{nonLiesubDef} above. We claim that the linear subspace $\nonLiesubCantuba$ has this interesting property that the commutator of any two basis elements from \eqref{Heisenbasis}, when expressed as a linear combination of \eqref{Heisenbasis}, has no term that is in $\nonLiesubCantuba$. To establish this claim, we need the following.

\begin{lemma}\label{inconstLem} For every $m,k\in\Ncantuba$ and every $n,l\in\Zcantuba^+$, the commutators
\begin{eqnarray}
\lbrackCantuba\LieABcantuba^m,\LieABcantuba^kA^l\rbrackCantuba, & \label{typeLieA}\\
\lbrackCantuba\LieABcantuba^m,B^l\LieABcantuba^k\rbrackCantuba, & \label{typeLieB}\\
\lbrackCantuba\LieABcantuba^mA^n,\LieABcantuba^kA^l\rbrackCantuba, & \label{typeAA}\\
\lbrackCantuba\LieABcantuba^mA^n,B^l\LieABcantuba^k\rbrackCantuba, &\quad\quad(n\neq l) \label{typeAB}\\
\lbrackCantuba B^n\LieABcantuba^m,B^l\LieABcantuba^k\rbrackCantuba, & \label{typeBB}
\end{eqnarray}
when expressed as linear combinations of the basis elements \eqref{Heisenbasis}, do not have a term  which is a basis element of $\nonLiesubCantuba$.
\end{lemma}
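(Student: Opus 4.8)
The plan is to go through the five commutator types case by case, in each case substituting the relevant structure-constant relations from Section~\ref{structconstSec} (and their torsion-type simplifications \eqref{medA1p}--\eqref{BAnsmallerp}, \eqref{reduceAB}) and checking that every surviving term lies outside $\nonLiesubCantuba$, i.e.\ is either a pure power $\LieABcantuba^k$, or a term $\LieABcantuba^k A^l$ or $B^l\LieABcantuba^k$ whose $\LieABcantuba$-exponent is congruent to $0$ modulo $p$, or has $A$- or $B$-exponent zero. The key observation that makes all five cases work is the same: whenever a term $\LieABcantuba^i A^l$ or $B^l \LieABcantuba^i$ with $l \geq 1$ actually appears with a nonzero coefficient in one of the product expansions, the relevant $q$-binomial or structure-constant factor forces either $l \equiv 0 \pmod p$ or forces the $\LieABcantuba$-exponent to be a multiple of $p$; and in the commutator $fg - gf$ the only terms that could violate this get cancelled or are never produced. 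So the heart of the argument is bookkeeping on exponents modulo $p$, organized by the table in Table~\ref{table:structconst}.

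First I would dispatch the easy cases \eqref{typeLieA} and \eqref{typeLieB}. For \eqref{typeLieA}, $\LieABcantuba^m \cdot \LieABcantuba^k A^l = \LieABcantuba^{m+k}A^l$ by \eqref{simple2}, while $\LieABcantuba^k A^l \cdot \LieABcantuba^m = q^{\overline{lm}}\LieABcantuba^{m+k}A^l$ by \eqref{medA1p}; hence the commutator is $(1 - q^{\overline{lm}})\LieABcantuba^{m+k}A^l$. This is a single term of the form $\LieABcantuba^{m+k}A^l$, and either its coefficient vanishes (when $lm \equiv 0 \pmod p$) or $lm \not\equiv 0 \pmod p$, in which case both $l\not\equiv 0$ and $m\not\equiv 0$ — but that is not enough by itself, so the real point is: this term is in $\nonLiesubCantuba$ only if \emph{both} $l\not\equiv 0\pmod p$ and $m+k\not\equiv 0\pmod p$. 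I would need to argue that when the coefficient is nonzero we must have $m+k\equiv 0\pmod p$; if that is false the lemma would be false, so presumably the claim is that the offending term's coefficient $(1-q^{\overline{lm}})$ is zero exactly when it would otherwise land in $\nonLiesubCantuba$ — I should double-check by re-reading Definition~\ref{nonLiesubDef}: a basis element $\LieABcantuba^n A^m$ is in $\nonLiesubCantuba$ iff $m,n$ nonzero and $m\not\equiv 0$, $n\not\equiv 0 \pmod p$. So $\LieABcantuba^{m+k}A^l\in\nonLiesubCantuba$ iff $l\not\equiv 0$, $m+k\not\equiv 0$, and $m+k\ge 1$, $l\ge 1$. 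Here $l\ge 1$ always; so I must show that if $l\not\equiv 0\pmod p$ then the coefficient $1 - q^{\overline{lm}}$ forces $m+k\equiv 0$ — which it does not in general. Hence I expect the correct reading is that \eqref{typeLieA} contributes a term in $\nonLiesubCantuba$ only when $m\equiv 0\pmod p$ and then the coefficient is zero; but if $m\not\equiv0$ and $l\not\equiv0$ the term $\LieABcantuba^{m+k}A^l$ survives and one needs $m+k\equiv0$. I would resolve this by noting that $m+k\not\equiv0$ and $m\not\equiv0$ does \emph{not} make the term vanish, so the lemma as literally stated must rely on a subtlety I have momentarily overlooked — most likely $\nonLiesubCantuba$'s exponent $n$ refers to the $\LieABcantuba$-power \emph{after} reduction, or the claim is about the commutator of two basis elements both lying in a Lie subalgebra; I would re-examine the ambient context before committing. \emph{This is the step I expect to be the main obstacle}: pinning down exactly which exponent congruence condition is being asserted and verifying the coefficients genuinely vanish in the borderline subcases.

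Modulo that clarification, the remaining cases \eqref{typeAA}, \eqref{typeAB}, \eqref{typeBB} follow the same template. For \eqref{typeAA} one uses \eqref{medA2p} twice: $\LieABcantuba^m A^n\cdot\LieABcantuba^k A^l = q^{\overline{nk}}\LieABcantuba^{m+k}A^{n+l}$ and $\LieABcantuba^k A^l\cdot\LieABcantuba^m A^n = q^{\overline{lm}}\LieABcantuba^{m+k}A^{n+l}$, so the commutator is $(q^{\overline{nk}} - q^{\overline{lm}})\LieABcantuba^{m+k}A^{n+l}$ — again a single term. For \eqref{typeBB} one uses \eqref{medB2p} symmetrically, producing $(q^{\overline{ml}} - q^{\overline{kn}})B^{n+l}\LieABcantuba^{m+k}$. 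For the crucial case \eqref{typeAB} with $n\neq l$, I would split into $n>l$ and $n<l$, and within each into the subcases $l<p$ vs.\ $l\ge p$ (resp.\ $n<p$ vs.\ $n\ge p$) so that the correct structure-constant relations among \eqref{ABnbigger},\eqref{ABnsmaller},\eqref{ABnbiggerp},\eqref{ABnsmallerp} and the mirrored $B^n\LieABcantuba^m\cdot\LieABcantuba^kA^l$ relations \eqref{BAnbigger},\eqref{BAnsmaller},\eqref{BAnbiggerp},\eqref{BAnsmallerp} apply. In the ``$l\ge p$'' regime the reduced relation \eqref{reduceAB} shows every term is a linear combination of $\LieABcantuba^{m+k+i}A^{n-l}$ with $i\in\{0,l\}$ (or the $B$-analogue), so the $\LieABcantuba$-exponents are $m+k$ and $m+k+l\equiv m+k\pmod p$ — the spacing is by $l\equiv 0$, so these cannot be a ``generic'' exponent and the term is outside $\nonLiesubCantuba$ unless $n-l=0$, excluded by $n\neq l$ in the $n>l$ branch, or handled in the $n<l$ branch where the residual $A$-power is $0$. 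In the ``$l<p$'' regime one reads off from \eqref{ABnbigger}/\eqref{ABnsmaller} that the summation index $i$ runs over $\{0,1,\dots,l\}$ with $q$-binomial coefficients ${l\choose i}_q$, and here I would invoke \eqref{qbireduce1} to see which coefficients survive and then match each surviving term against the opposite product $B^l\LieABcantuba^k\cdot\LieABcantuba^m A^n$ to exhibit the cancellation of any term that would land in $\nonLiesubCantuba$. Assembling these subcases, together with the coefficient-vanishing checks from the first paragraph, completes the proof; the write-up is mostly a disciplined table walk, with the only genuine thought being the exponent-congruence accounting in \eqref{typeAB}.
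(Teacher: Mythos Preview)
Your central difficulty is a misreading of Definition~\ref{nonLiesubDef}. The phrase ``both are not congruent to zero modulo $p$'' is admittedly awkward, but the intended meaning---which you can recover from Definition~\ref{preLieDef} together with Corollary~\ref{TheCor}, since $\nonLiesubCantuba$ must be complementary to the span of the elements in \eqref{resexp}---is that a basis element $\LieABcantuba^{n}A^{m}$ or $B^{m}\LieABcantuba^{n}$ lies in $\nonLiesubCantuba$ precisely when $m,n$ are both nonzero \emph{and} $m\equiv 0$, $n\equiv 0\pmod p$. Under your reading (both exponents $\not\equiv 0$) you correctly observe that case \eqref{typeLieA} already fails; that contradiction should prompt you to revisit the definition rather than hedge. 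With the correct reading, \eqref{typeLieA} and \eqref{typeLieB} are immediate: if $\LieABcantuba^{m+k}A^{l}\in\nonLiesubCantuba$ then in particular $l\equiv 0\pmod p$, hence $q^{lm}=1$ and the coefficient $1-q^{lm}$ vanishes. Cases \eqref{typeAA} and \eqref{typeBB} are equally short: if $m+k\equiv 0$ and $n+l\equiv 0\pmod p$, write $k=Mp-m$ and $n=Np-l$ to get $ml-nk=p(Nm+Ml-MNp)$, so $q^{nk}=q^{lm}$ and the single term again has coefficient zero.

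For \eqref{typeAB} your plan also goes astray. The further split into $l<p$ versus $l\geq p$ is unnecessary, and your inference that $l\ge p$ gives ``spacing by $l\equiv 0$'' is false: $l\ge p$ does not imply $l\equiv 0\pmod p$. The paper works directly with \eqref{ABnbigger} and \eqref{BAnsmaller} (for $n>l$; the case $n<l$ is symmetric). The coefficient of $\LieABcantuba^{m+k+j}A^{n-l}$ in the commutator is
\[
e_j=q^{(k+j)(n-l)}c_j(l)-q^{-(m+k)l}d_j(l).
\]
If this term lies in $\nonLiesubCantuba$ then $n-l\equiv 0$ and $m+k+j\equiv 0\pmod p$; substituting these congruences reduces $e_j$ to $c_j(l)-q^{jl}d_j(l)$. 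Unpacking \eqref{cidef} and \eqref{didef}, this vanishes by the elementary integer identity
\[
jl-{l\choose 2}+{l-j\choose 2}={j+1\choose 2},
\]
which holds for all $j,l$ and has nothing to do with torsion. That identity is the one genuine computation in the lemma, and your outline never reaches it.
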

\begin{proof} We first tackle the most complicated commutator, which is \eqref{typeAB}. Consider the case $n>  l$. Using \eqref{ABnbigger}, \eqref{BAnsmaller}, we have
\begin{eqnarray}
\lbrackCantuba\LieABcantuba^mA^n,B^l\LieABcantuba^k\rbrackCantuba = \sum_{i=0}^le_i\LieABcantuba^{m+k+i}A^{n-l},
\end{eqnarray}
where $e_i=q^{(k+i)(n-l)}c_i(l)-q^{-(m+k)l}d_i(l)$. Suppose that $i$ assumes some value $j$ such that the corresponding term $$\LieABcantuba^{m+k+j}A^{n-l}$$ is a basis element of $\nonLiesubCantuba$. We prove that $e_j=0$. Since $\LieABcantuba^{m+k+j}A^{n-l}$ is a basis element of $\nonLiesubCantuba$, we have
\begin{eqnarray}
n-l & \equiv & 0\mod p,\label{nlmod}\\
m+k+j & \equiv & 0\mod p.\label{mkjmod}
\end{eqnarray}
Using \eqref{nlmod}, for some integer $N$, we have $n-l=Np$, and so $e_j=(q^p)^{(k+j)N}c_j(l)-q^{-(m+k)l}d_j(l)$, which simplifies into $e_j=c_j(l)-q^{-(m+k)l}d_j(l)$. The condition \eqref{mkjmod} implies that $-(m+k)=j-pJ$ for some integer $J$, which means that we can further simplify $e_j$ as $e_j=c_j(l)-q^{jl}(q^p)^{-J}d_j(l)=c_j(l)-q^{jl}d_j(l)$. At this point, we use \eqref{cidef}, \eqref{didef} to obtain
\begin{eqnarray}
e_j& = &\left(q^{{j+1}\choose{2}}-q^{jl-{l\choose{2}}+{l-j\choose{2}}}\right) (q-1)^{-l}(-1)^{l-j}{{l}\choose{j}}_q.\label{ejsimp}
\end{eqnarray}
We bring the reader's attention to the left-most factor in the right-hand side of \eqref{ejsimp}, which is $q^{{j+1}\choose{2}}-q^{jl-{l\choose{2}}+{l-j\choose{2}}}$. The exponent of $q$ in the second term can be simplified as
\begin{eqnarray}
jl-{l\choose{2}}+{l-j\choose{2}} & = & jl+\frac{1}{2}(l-j)(l-j-1)-\frac{1}{2}l(l-1)\\
& = & \frac{1}{2}j^2+\frac{1}{2}j={j+1\choose{2}}.
\end{eqnarray}
Therefore $q^{{j+1}\choose{2}}-q^{jl-{l\choose{2}}+{l-j\choose{2}}}=q^{{j+1}\choose{2}}-q^{{j+1}\choose{2}}=0$, and so $e_j=0$. The proof for the case $n<l$ is similar, but the relations \eqref{ABnsmaller}, \eqref{BAnbigger} are used instead of \eqref{ABnbigger}, \eqref{BAnsmaller}. Next, we consider commutators of type \eqref{typeAA} and \eqref{typeBB}. Using the methods in Section~\ref{structconstSec}, we have
\begin{eqnarray}
\lbrackCantuba\LieABcantuba^mA^n,\LieABcantuba^kA^l\rbrackCantuba & = &  q^{nk}(1-q^{ml-nk})\LieABcantuba^{m+k}A^{n+l},\label{AAsimp1}\\
\lbrackCantuba B^n\LieABcantuba^m,B^l\LieABcantuba^k\rbrackCantuba & = & q^{nk}(q^{ml-nk}-1) B^{n+l}\LieABcantuba^{m+k}.\label{BBsimp1}
\end{eqnarray}
If $\LieABcantuba^{m+k}A^{n+l}$ or $B^{n+l}\LieABcantuba^{m+k}$ is a basis element of $\nonLiesubCantuba$, then we have the condition that both $m+k$ and $n+l$ are congruent to zero modulo $p$. Then for some integers $M,N$, we have $k=Mp-m$ and $n=Np-l$. By routine computations, we have $ml-nk=p(Nm+Ml-MNp)$. Thus, \eqref{AAsimp1}, \eqref{BBsimp1} become
\begin{eqnarray}
\lbrackCantuba\LieABcantuba^mA^n,\LieABcantuba^kA^l\rbrackCantuba & = &  q^{nk}(1-(q^p)^{Nm+Ml-MNp})\LieABcantuba^{m+k}A^{n+l},\nonumber\\
\lbrackCantuba B^n\LieABcantuba^m,B^l\LieABcantuba^k\rbrackCantuba & = & q^{nk}((q^p)^{Nm+Ml-MNp}-1) B^{n+l}\LieABcantuba^{m+k},\nonumber
\end{eqnarray}
from which we see that, since $q^p=1$, the coefficients of any basis element of $\nonLiesubCantuba$ is zero in the linear combinations for the commutators \eqref{typeAA}, \eqref{typeBB}. Finally, we have the commutators of type \eqref{typeLieA}, \eqref{typeLieB}. Using the techniques in Section~\ref{structconstSec}, we have
\begin{eqnarray}
\lbrackCantuba\LieABcantuba^m,\LieABcantuba^kA^l\rbrackCantuba & = & (1-q^{lm})\LieABcantuba^{m+k}A^l,\label{LieAsimp}\\
\lbrackCantuba\LieABcantuba^m,B^l\LieABcantuba^k\rbrackCantuba & = & -(1-q^{lm})B^l\LieABcantuba^{m+k}\label{LieBsimp}.
\end{eqnarray}
If we assume that $\LieABcantuba^{m+k}A^l$ or $B^l\LieABcantuba^{m+k}$ is a basis element of $\nonLiesubCantuba$, then $l$ and $m+k$ are both congruent to zero modulo $p$. Then $q^{lm}=1$, and so the coefficients of $\LieABcantuba^{m+k}A^l$ and $B^l\LieABcantuba^{m+k}$ in \eqref{LieAsimp}, \eqref{LieBsimp} are both zero. This completes the proof.\qed
\end{proof}

\begin{corollary}\label{TheCor} The sum of the linear subspaces $\torsionLieCantuba$ and $\nonLiesubCantuba$ of $\torsionHeisenCantuba$ is direct.
\end{corollary}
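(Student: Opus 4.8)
The assertion is equivalent to $\torsionLieCantuba\cap\nonLiesubCantuba=\{0\}$. Let $W$ denote the linear span of those basis elements from \eqref{Heisenbasis} that are \emph{not} basis elements of $\nonLiesubCantuba$. Since the basis \eqref{Heisenbasis} of $\torsionHeisenCantuba$ partitions into the basis elements of $\nonLiesubCantuba$ and the remaining ones, we have $\torsionHeisenCantuba=W\oplus\nonLiesubCantuba$, and in particular $W\cap\nonLiesubCantuba=\{0\}$; so it suffices to prove $\torsionLieCantuba\subseteq W$. The plan is to show that $W$ is itself a Lie subalgebra of $\torsionHeisenCantuba$ containing $A$ and $B$, and then to invoke the minimality of $\torsionLieCantuba$ among the Lie subalgebras of $\torsionHeisenCantuba$ that contain $A$ and $B$.

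First I would observe that $A=\LieABcantuba^0A^1$ and $B=B^1\LieABcantuba^0$ are basis elements from \eqref{Heisenbasis}, and neither lies in $\nonLiesubCantuba$, because in each case one of its two exponents is $0$; hence $A,B\in W$. It then remains to check $[W,W]\subseteq W$, and for this it suffices, by bilinearity of the Lie bracket, to show that $\lbrackCantuba\beta,\gamma\rbrackCantuba\in W$ for all basis elements $\beta,\gamma$ from \eqref{Heisenbasis}; i.e., that $\lbrackCantuba\beta,\gamma\rbrackCantuba$, when expressed in the basis \eqref{Heisenbasis}, has no term which is a basis element of $\nonLiesubCantuba$. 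This is precisely the kind of statement supplied by Lemma~\ref{inconstLem}, which is conveniently stated there for all basis elements rather than only those outside $\nonLiesubCantuba$.

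Next I would run through the possible types \eqref{typeLie}, \eqref{typeA0}, \eqref{typeB0} of $\beta$ and $\gamma$, using anticommutativity to reduce to unordered pairs. Apart from the trivial case $\lbrackCantuba\LieABcantuba^m,\LieABcantuba^k\rbrackCantuba=0\in W$, every such commutator is either one of \eqref{typeLieA}, \eqref{typeLieB}, \eqref{typeAA}, \eqref{typeBB}, or \eqref{typeAB} with $n\neq l$, all of which are handled by Lemma~\ref{inconstLem}, or else it is the ``resonant'' mixed commutator $\lbrackCantuba\LieABcantuba^mA^n,B^n\LieABcantuba^k\rbrackCantuba$ with coinciding exponents, which Lemma~\ref{inconstLem} does not state. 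For that last one I would note that, in the $\Zcantuba$-gradation $\{\gradsubCantuba_n:n\in\Zcantuba\}$ recalled at the end of Section~\ref{structconstSec}, one has $\LieABcantuba^mA^n\in\gradsubCantuba_{-n}$ and $B^n\LieABcantuba^k\in\gradsubCantuba_{n}$, so $\lbrackCantuba\LieABcantuba^mA^n,B^n\LieABcantuba^k\rbrackCantuba\in\gradsubCantuba_0$, whose basis $\{\LieABcantuba^j:j\in\Ncantuba\}$ consists of elements of type \eqref{typeLie}, none a basis element of $\nonLiesubCantuba$; hence again there is no $\nonLiesubCantuba$-term. Alternatively this commutator can be computed directly from \eqref{moveBj}, \eqref{equalAlBl}, \eqref{equalBlAl} and seen to be a linear combination of the $\LieABcantuba^j$. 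Thus $\lbrackCantuba\beta,\gamma\rbrackCantuba\in W$ in every case.

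Consequently $W$ is a Lie subalgebra of $\torsionHeisenCantuba$ containing $A$ and $B$, so $\torsionLieCantuba\subseteq W$ by minimality, and therefore $\torsionLieCantuba\cap\nonLiesubCantuba\subseteq W\cap\nonLiesubCantuba=\{0\}$, which is the claim. I do not anticipate a genuine obstacle here: Lemma~\ref{inconstLem} carries essentially all the content, and the only point needing care is checking that the list of cases is exhaustive --- in particular, isolating and dispatching the equal-exponent mixed commutator that the lemma does not explicitly record.\qed
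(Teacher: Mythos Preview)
Your proof is correct and follows essentially the same route as the paper's: both reduce to showing that the commutator of any two basis elements from \eqref{Heisenbasis} has no term in $\nonLiesubCantuba$, invoke Lemma~\ref{inconstLem} for all cases except the equal-exponent mixed commutator, and dispatch that one separately (you via the $\Zcantuba$-gradation, the paper by noting the result is a polynomial in $\LieABcantuba$). Your explicit framing via the complementary subspace $W$ and the minimality of $\torsionLieCantuba$ is a slightly cleaner packaging than the paper's inductive wording, but the substance is the same.
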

\begin{proof} This is equivalent to saying that every Lie polynomial in $A,B$, when written as a linear combination of \eqref{Heisenbasis}, does not have a term which is a basis element of $\nonLiesubCantuba$. This clearly holds for the generators $A,B$. We are done if we show that given Lie polynomials $u,v$, the commutator $\lbrackCantuba u,v\rbrackCantuba$, when written as a linear combination of \eqref{Heisenbasis}, does not have a term which is a basis element of $\nonLiesubCantuba$. Each of $u,v$, anyway, is a linear combination of \eqref{Heisenbasis}, and so we are done if we show that the commutator of any two basis elements from \eqref{Heisenbasis}, when written as a linear combination of \eqref{Heisenbasis}, does not have a term which is a basis element of $\nonLiesubCantuba$. For this, we need the commutator table for the elements \eqref{Heisenbasis} of $\torsionHeisenCantuba$, and in each cell of the commutator table, instead of showing the lengthy formula, we give the justification of why the resulting commutator does not have a term that is in $\nonLiesubCantuba$. The commutators in Lemma~\ref{inconstLem} cover all such cases, as shown in the table below:\\
\begin{center}
\scalebox{0.73}{
\begin{tabular}{|c|c|c|c|}
\hline
$\lbrackCantuba\cdot,\cdot\rbrackCantuba$ &  $\LieABcantuba^k$ & $\LieABcantuba^kA^l$ & $B^l\LieABcantuba^k$  \\
\hline
$\LieABcantuba^m$  & 0 & \eqref{typeLieA} & \eqref{typeLieB} \\
\hline
$\LieABcantuba^mA^n$  & - & \eqref{typeAA} & \eqref{typeAB} \\
\hline
$B^n\LieABcantuba^m$  & - & - & \eqref{typeBB} \\
\hline
\end{tabular}}
\captionof{table}{Taking the commutator of any two basis elements from \eqref{Heisenbasis} results to a linear combination which does not have a term that is a basis element of $\nonLiesubCantuba$\\}\label{table:comm}
\end{center}
Some cells in Table~\ref{table:comm} are marked with a dash because of the skew-symmetry of the commutator as a Lie bracket operation, while the cell with $0$ is because any two powers of $\LieABcantuba$ commute. The equation numbers in the other cells indicate the commutator considered in the proof of Lemma~\ref{inconstLem} which does not have a term that is a basis element of $\nonLiesubCantuba$. We note here that \eqref{typeAB} has the restriction $n\neq l$, but we further justify that even if we have $n=l$, by the methods in Section~\ref{structconstSec}, the result is a polynomial in $\LieABcantuba$ and in such an expression, none of the basis elements of $\nonLiesubCantuba$ has a nonzero coefficient. This completes the proof.\qed
\end{proof}

\section{The Lie algebra $\torsionLieCantuba$}

In this section, we construct a Lie subalgebra of $\torsionHeisenCantuba$ with a given basis, and we show that this is isomorphic to $\torsionLieCantuba$.

\begin{definition}\label{preLieDef} We define $\preHLieCantuba$ as the $\Fcantuba$-linear subspace of $\torsionHeisenCantuba$ spanned by the following elements.
\begin{eqnarray}\label{pHeisenLiebasis}
A,\  B,\quad\LieABcantuba^n, &\\
\quad\LieABcantuba^k A^l,\quad B^{l}\LieABcantuba^k, &\label{resexp}
\end{eqnarray}
where $n,k,l\in\Zcantuba^+$ such that $n\geq 2$ implies $n-1\  {\not\equiv}\  0\mod p$, and that in each element of the form \eqref{resexp}, at least one of $k,l$ is not congruent to zero modulo $p$.  We define $\derLieCantuba$ as the linear subspace of $\preHLieCantuba$ spanned by all elements in \eqref{pHeisenLiebasis} except $A,B$.
\end{definition}

Since the elements in \eqref{pHeisenLiebasis} are among \eqref{Heisenbasis} that are linearly independent in $\torsionHeisenCantuba$, we find that the elements in \eqref{pHeisenLiebasis} form a basis for $\preHLieCantuba$, while the elements in \eqref{pHeisenLiebasis} except $A,B$ form a basis for $\derLieCantuba$.

\def\colIII{\LieABcantuba^r}
\def\colIV{\LieABcantuba^rA^s}
\def\colV{B^s\LieABcantuba^r}

\def\rowIII{\LieABcantuba^m}
\def\rowIV{\LieABcantuba^mA^n}
\def\rowV{B^n\LieABcantuba^m}

\begin{lemma}\label{expsLem} If $m,n,r,s\in\Zcantuba^+$, then $\lbrackCantuba \rowIV, \colV\rbrackCantuba$ is a linear combination of elements in \eqref{pHeisenLiebasis}, in all of which, the exponents of $\LieABcantuba$ are always positive.
\end{lemma}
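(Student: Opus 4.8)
The plan is to compute $\lbrackCantuba \rowIV, \colV\rbrackCantuba$ outright from the structure-constant relations of Section~\ref{structconstSec} together with their torsion-type simplifications \eqref{medA1p}--\eqref{reduceAB}, splitting into the cases $n>s$, $n<s$ and $n=s$.

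For $n\neq s$ the commutator is exactly of type \eqref{typeAB}. Expanding $\LieABcantuba^m A^n\cdot B^s\LieABcantuba^r$ and $B^s\LieABcantuba^r\cdot\LieABcantuba^m A^n$ through the entries of Table~\ref{table:structconst}---namely \eqref{ABnbigger} and \eqref{BAnsmaller} if $n>s$, and \eqref{ABnsmaller} and \eqref{BAnbigger} if $n<s$---one finds that $\lbrackCantuba \LieABcantuba^m A^n, B^s\LieABcantuba^r\rbrackCantuba$ is, for $n>s$, a linear combination of the $\LieABcantuba^{m+r+i}A^{n-s}$ with $0\le i\le s$, and, for $n<s$, a linear combination of the $B^{s-n}\LieABcantuba^{m+r+i}$ with $0\le i\le n$. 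In every one of these terms the exponent of $\LieABcantuba$ equals $m+r+i\ge m+r\ge 2$, hence is positive; moreover Lemma~\ref{inconstLem}, applied to \eqref{typeAB}, guarantees that no such term lies in $\nonLiesubCantuba$, so each surviving $\LieABcantuba^{k}A^{l}$ (resp. $B^{l}\LieABcantuba^{k}$) with $k\ge 1$ is among the elements \eqref{pHeisenLiebasis}. This disposes of the two unequal cases.

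The case $n=s$ carries the real content. Here $\LieABcantuba^m A^n\cdot B^n\LieABcantuba^r=\LieABcantuba^m(A^nB^n)\LieABcantuba^r$, and $B^n\LieABcantuba^r\cdot\LieABcantuba^m A^n=q^{-(m+r)n}(B^nA^n)\LieABcantuba^{m+r}$ by \eqref{moveAi}; substituting \eqref{newAlBl} and \eqref{newBlAl} (simplified through \eqref{reduceAB} when $n\ge p$) then yields $\lbrackCantuba \LieABcantuba^m A^n, B^n\LieABcantuba^r\rbrackCantuba=\sum_i\bigl(c_i(n)-q^{-(m+r)n}d_i(n)\bigr)\LieABcantuba^{m+i+r}$, a polynomial in $\LieABcantuba$ in which every term has exponent $m+i+r\ge m+r\ge 2>0$. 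It remains to check that whenever $\LieABcantuba^{m+i+r}$ is not one of the elements \eqref{pHeisenLiebasis}, i.e.\ whenever $m+i+r\ge 2$ and $m+i+r-1\equiv 0\bmod p$, the coefficient $c_i(n)-q^{-(m+r)n}d_i(n)$ vanishes. Plugging in \eqref{cidef}, \eqref{didef} and using the exponent identity $jl-\binom{l}{2}+\binom{l-j}{2}=\binom{j+1}{2}$ obtained in the proof of Lemma~\ref{inconstLem}, this coefficient becomes a nonzero scalar times $\binom{n}{i}_q\bigl(1-q^{-(m+r+i)n}\bigr)$, and one argues that under the displayed congruence one of the two factors is forced to be zero, invoking \eqref{qmod} and the $q$-binomial reductions \eqref{qbireduce1}, \eqref{qbireduce2} (with the usual split on $n<p$ versus $n\ge p$, as in \eqref{Reduce1}--\eqref{Reduce3}).

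I expect the main obstacle to be exactly this last coefficient computation in the case $n=s$: one has to reconcile the residue of $m+i+r$ modulo $p$ with the vanishing of $1-q^{-(m+r+i)n}$ or of $\binom{n}{i}_q$, so that the restriction on $\LieABcantuba$-exponents imposed in Definition~\ref{preLieDef} turns out to be precisely what the torsion-type reductions of Section~\ref{ConSec} produce. The two cases $n\neq s$ are, by comparison, a direct application of Lemma~\ref{inconstLem} together with the bound $m+r\ge 2$.
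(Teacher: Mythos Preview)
Your case split and the expansion via the structure constants is exactly what the paper does, and your treatment of the cases $n\neq s$ is if anything more complete than the paper's: the paper simply writes out the three sums
\[
f=\sum_{i}\bigl(c_i-q^{\ast}d_i\bigr)\,\LieABcantuba^{m+i+r}\,X
\]
(with $X=I$, $A^{n-s}$, or $B^{s-n}$) and observes that $m+i+r\ge 2$, without explicitly invoking Lemma~\ref{inconstLem} to exclude $\nonLiesubCantuba$. So on that front you are fine.

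The difficulty you anticipate in the last paragraph, however, is real and cannot be overcome as you describe. In the case $n=s$ you propose to show that the coefficient of $\LieABcantuba^{m+i+r}$ vanishes whenever $m+i+r-1\equiv 0\bmod p$. This is false: take $p=2$ (so $q=-1$) and $m=r=n=s=1$. A direct computation gives $\lbrackCantuba\LieABcantuba A,\,B\LieABcantuba\rbrackCantuba=\LieABcantuba^{3}$, so the coefficient of $\LieABcantuba^{3}$ is $1$, yet $3-1\equiv 0\bmod 2$. In your factorization $\binom{n}{i}_q\bigl(1-q^{-(m+r+i)n}\bigr)$ this is the case $\binom{1}{1}_q=1$ and $1-q^{-3}=2$, and neither factor vanishes; the congruence $m+r+i\equiv 1\bmod p$ gives $q^{-(m+r+i)n}=q^{-n}$, which is $1$ only when $n\equiv 0\bmod p$, and then $\binom{n}{i}_q$ need not vanish either (e.g.\ $i=0$ or $i=n$).

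The paper's own proof does \emph{not} attempt this verification at all: it stops after noting $m+i+r\ge 2$. The restriction ``$n\ge 2$ implies $n-1\not\equiv 0\bmod p$'' on the powers $\LieABcantuba^{n}$ in Definition~\ref{preLieDef} is inconsistent with the rest of the paper---both Table~\ref{table:LPC} (row $\gradsubCantuba_0$) and the last paragraph of the proof of Lemma~\ref{subsetLem} show that \emph{every} $\LieABcantuba^{n}$ with $n\ge 1$ is a Lie polynomial, and indeed $\lbrackCantuba\LieABcantuba A,\,B\LieABcantuba\rbrackCantuba=\LieABcantuba^{3}$ above is exactly the paper's construction for $p=2$. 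Read the basis \eqref{pHeisenLiebasis} as containing all $\LieABcantuba^{n}$, $n\ge 1$; then your $n=s$ case is finished the moment you observe the exponents are positive, and the final paragraph of your proposal can be deleted.
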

\begin{proof} The commutator $f:=\lbrackCantuba \rowIV, \colV\rbrackCantuba=\LieABcantuba^mA^n\cdot B^n\LieABcantuba^r-B^n\LieABcantuba^{m}\cdot \LieABcantuba^{r}A^n $ can be simplified using the techniques discussed in Section~\ref{structconstSec}. Based on such routine computations, we have

\begin{eqnarray}
f=\sum_{i=0}^n(c_i(n)-q^{n(m+r)}d_i(n))\LieABcantuba^{m+i+r},\quad\quad (n=s),\nonumber\\
 f=\sum_{i=0}^n(q^{(s-n)(m+n)}c_i(n)-q^{n(m+r)}d_i(n))B^{s-n}\LieABcantuba^{m+i+r},\quad\quad (n<s),\nonumber\\
f=\sum_{i=0}^s(q^{(n-s)(i+r)}c_i(s)-q^{s(m+r)}d_i(s))\LieABcantuba^{m+i+r}A^{n-s},\quad\quad (n>s).\nonumber
\end{eqnarray}
In any of the above cases, the elements from \eqref{Heisenbasis} that appear in the linear combination for $f$ involve only those with $\LieABcantuba^{m+i+r}$ where $m+i+r\geq 2$ for all $i$.\qed
\end{proof}

\begin{lemma}\label{LiesubLem} If $f,g$ are any two of the basis elements of $\preHLieCantuba$ in \eqref{pHeisenLiebasis}, then $\lbrackCantuba f,g\rbrackCantuba\in\derLieCantuba$.
\end{lemma}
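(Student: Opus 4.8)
The plan is to check the commutator of every unordered pair $\{f,g\}$ of basis elements from \eqref{pHeisenLiebasis}, organised by the five shapes $A$, $B$, $\LieABcantuba^n$, $\LieABcantuba^kA^l$, $B^l\LieABcantuba^k$ (the last three with $n,k,l\in\Zcantuba^+$). Skew-symmetry of the bracket reduces this to the cells of a $5\times5$ commutator table, and several are immediate: $\lbrackCantuba A,A\rbrackCantuba=\lbrackCantuba B,B\rbrackCantuba=0$, $\lbrackCantuba\LieABcantuba^m,\LieABcantuba^n\rbrackCantuba=0$ since powers of $\LieABcantuba$ commute, and $\lbrackCantuba A,B\rbrackCantuba=\LieABcantuba=\LieABcantuba^1\in\derLieCantuba$.

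Membership $\lbrackCantuba f,g\rbrackCantuba\in\derLieCantuba$ means that, once $\lbrackCantuba f,g\rbrackCantuba$ is expanded in the basis \eqref{Heisenbasis} of $\torsionHeisenCantuba$, it involves only those \eqref{Heisenbasis}-elements that are also basis elements of $\derLieCantuba$; equivalently it must avoid: (a) every basis element of $\nonLiesubCantuba$; (b) $\algIcantuba$ and every pure power $A^l$, $B^l$ ($l\in\Zcantuba^+$) --- that is, every surviving term must carry a strictly positive power of $\LieABcantuba$, which in particular excludes $A$ and $B$ themselves; and (c) every power $\LieABcantuba^n$ that is not among the basis elements \eqref{pHeisenLiebasis}. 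Condition (a) is already available: each basis element of $\preHLieCantuba$ is itself one of the elements \eqref{Heisenbasis} (with $A=\LieABcantuba^0A^1$ and $B=B^1\LieABcantuba^0$), so Lemma~\ref{inconstLem} --- together with the remark in the proof of Corollary~\ref{TheCor} that covers the case $n=l$ in \eqref{typeAB}, needed when one argument is $A$ or $B$ --- already shows that no $\nonLiesubCantuba$-term appears in $\lbrackCantuba f,g\rbrackCantuba$.

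For (b) and (c) I would feed each cell into the explicit commutator formulas already on record: \eqref{LieAsimp} and \eqref{LieBsimp} for $\lbrackCantuba\LieABcantuba^m,\LieABcantuba^kA^l\rbrackCantuba$ and $\lbrackCantuba\LieABcantuba^m,B^l\LieABcantuba^k\rbrackCantuba$; \eqref{AAsimp1} and \eqref{BBsimp1} for $\lbrackCantuba\LieABcantuba^mA^n,\LieABcantuba^kA^l\rbrackCantuba$ and $\lbrackCantuba B^n\LieABcantuba^m,B^l\LieABcantuba^k\rbrackCantuba$; Lemma~\ref{expsLem} for the mixed cell $\lbrackCantuba\LieABcantuba^mA^n,B^l\LieABcantuba^k\rbrackCantuba$ with all exponents positive; and, for the cells in which one argument is $A$ or $B$, the commuting relations \eqref{moveAi}, \eqref{moveBj} together with the structure-constant relations \eqref{ABnbigger}, \eqref{ABnsmaller}, \eqref{BAnbigger} (or their reduced forms \eqref{reduceAB}, \eqref{ABnbiggerp}--\eqref{BAnsmallerp}). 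The cells carrying an $A$ or $B$ argument yield either a scalar multiple of a single basis element of $\derLieCantuba$ --- for instance $\lbrackCantuba A,\LieABcantuba^n\rbrackCantuba=(q^n-1)\LieABcantuba^nA$, $\lbrackCantuba A,\LieABcantuba^kA^l\rbrackCantuba=(q^k-1)\LieABcantuba^kA^{l+1}$, $\lbrackCantuba B,B^l\LieABcantuba^k\rbrackCantuba=(1-q^k)B^{l+1}\LieABcantuba^k$ --- or, in the cells $\lbrackCantuba A,B^l\LieABcantuba^k\rbrackCantuba$ and $\lbrackCantuba B,\LieABcantuba^kA^l\rbrackCantuba$, a short linear combination of two \eqref{Heisenbasis}-elements. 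In every one of these outputs the exponents of $\LieABcantuba$ that occur are manifestly $\geq1$ --- this is exactly the content of Lemma~\ref{expsLem} in the hardest cell --- which establishes (b). For (c) the mechanism is uniform: whenever a term attached to some forbidden $\LieABcantuba^n$ would appear, its scalar coefficient (a difference such as $q^k-1$, or a combination of the $c_i(l),d_i(l)$ of \eqref{cidef}, \eqref{didef}) is forced to vanish, using $q^p=1$, the criterion \eqref{qmod} for $\{n\}_q=0$, and the collapse \eqref{qbireduce1}--\eqref{qbireduce2} of the Gaussian binomial coefficients --- the very phenomenon already exploited in the proof of Lemma~\ref{inconstLem}.

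I expect the bookkeeping in the cell $\lbrackCantuba\LieABcantuba^mA^n,B^l\LieABcantuba^k\rbrackCantuba$ to be the main obstacle. It splits into the subcases $n>l$, $n<l$, $n=l$, each routed through a different relation of Section~\ref{structconstSec} (or its reduced analogue), and one must follow the $\LieABcantuba$-exponents and the scalar coefficients in parallel to obtain (a), (b) and (c) at once. Lemma~\ref{expsLem} already settles (b) for the all-positive-exponent instance, so what remains is the coefficient-vanishing argument for (c); this runs exactly parallel to the computation $e_j=0$ in the proof of Lemma~\ref{inconstLem} --- the same identity $q^{\binom{j+1}{2}}=q^{jl-\binom{l}{2}+\binom{l-j}{2}}$ and the same use of $q^p=1$ --- so no new idea is required, only disciplined case management. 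Once all cells are checked, $\preHLieCantuba$ is closed under the bracket and every $\lbrackCantuba f,g\rbrackCantuba$ lands in the span of \eqref{pHeisenLiebasis} minus $\{A,B\}$, that is, in $\derLieCantuba$.
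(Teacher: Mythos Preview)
Your strategy matches the paper's: both go cell by cell through the commutator table, invoke Lemma~\ref{expsLem} for the mixed cell $[\LieABcantuba^mA^n,B^s\LieABcantuba^r]$, appeal to Corollary~\ref{TheCor} to exclude $\nonLiesubCantuba$-terms, and handle the remaining cells with the structure-constant identities of Section~\ref{structconstSec}. Your decomposition into conditions (a), (b), (c) is more explicit than the paper's terse treatment, but the architecture is identical.

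Where you go beyond the paper --- condition~(c), that no ``forbidden'' pure power $\LieABcantuba^n$ survives --- your sketch does not hold up. You claim the relevant coefficients vanish by the mechanism of Lemma~\ref{inconstLem}, but that lemma is about $\nonLiesubCantuba$-elements, which always carry a positive power of $A$ or $B$; the identity $q^{\binom{j+1}{2}}=q^{jl-\binom{l}{2}+\binom{l-j}{2}}$ has no bearing on pure powers of $\LieABcantuba$. Concretely, $[B,\LieABcantuba^pA]=-\LieABcantuba^{p+1}$ with both arguments in $\preHLieCantuba$, yet $\LieABcantuba^{p+1}$ is excluded by the clause ``$n\geq2$ implies $n-1\not\equiv0\pmod p$'' of Definition~\ref{preLieDef}. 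The paper's own proof does not address~(c) either, and in fact that clause is inconsistent with the rest of the paper: Lemma~\ref{subsetLem} explicitly constructs every $\LieABcantuba^{k+2}$, including those with $k+1\equiv0\pmod p$, as a Lie polynomial, and Table~\ref{table:LPC} lists every $\LieABcantuba^n$ with $n\geq1$ as one. With that clause removed, condition~(c) is vacuous and both arguments are complete.
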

\begin{proof} The simplest case is $f=A$ and $g=B$, where clearly the commutator of these two elements is in $\derLieCantuba$. Let $k,l,r,s\in\Zcantuba^+$. If $\lbrackCantuba f,g\rbrackCantuba$ is one of
\begin{eqnarray}
\lbrackCantuba A,\colIII\rbrackCantuba,\  \lbrackCantuba A,\colIV\rbrackCantuba,\  \lbrackCantuba \rowIII,\colIV\rbrackCantuba,\  \lbrackCantuba \rowIV,\colIV\rbrackCantuba,
\end{eqnarray}
then by the techniques in Section~\ref{structconstSec}, mainly with the use of the relation \eqref{moveAi}, it can be shown that $\lbrackCantuba f,g\rbrackCantuba$ is in the span of elements of the form $\LieABcantuba^kA^l$ for all $k,l\in\Zcantuba^+$, where at least one of $k,l$ is not congruent to zero modulo $p$, because of Corollary~\ref{TheCor}. Similarly, if $\lbrackCantuba f,g\rbrackCantuba$ is one of the following
\begin{eqnarray}
\lbrackCantuba B,\colIII\rbrackCantuba,\  \lbrackCantuba B,\colV\rbrackCantuba,\  \lbrackCantuba \rowIII,\colV\rbrackCantuba,\  \lbrackCantuba \rowV,\colV\rbrackCantuba,
\end{eqnarray}
then by using the relation \eqref{moveBj} as described in Section~\ref{structconstSec}, $\lbrackCantuba f,g\rbrackCantuba$ is in the span of elements of the form $B^l\LieABcantuba^k$ for all $k,l\in\Zcantuba^+$, where, again, at least one of $k,l$ is not congruent to zero modulo $p$, because of Corollary~\ref{TheCor}.. The case $\lbrackCantuba f,g\rbrackCantuba=\lbrackCantuba\rowIV ,\colV\rbrackCantuba$ has been dealt with in Lemma~\ref{expsLem}. Consider the case $f=B$ and $g=\colIV$. Using the techniques in Section~\ref{structconstSec}, but mainly through the relations \eqref{moveBj}, \eqref{equalAlBl}, \eqref{equalBlAl}, it is routine to show that $\lbrackCantuba f,g\rbrackCantuba$ is a linear combination of $\LieABcantuba^{r+1}A^{s-1}$ and $\LieABcantuba^rA^{s-1}$, and so $\lbrackCantuba f,g\rbrackCantuba\in\derLieCantuba$. We have a similar proof that $\lbrackCantuba f,g\rbrackCantuba\in\derLieCantuba$ if $f=A$ and $g=\colV$. In these last two cases, we note that Corollary~\ref{TheCor} is in effect. All other cases for $f,g$ not mentioned in the above would either result to $0$ or to $-\lbrackCantuba f',g'\rbrackCantuba$ where $f',g'$ satisfy one of the previously mentioned cases. This completes the proof.\qed
\end{proof}

\begin{lemma}\label{subsetLem} Each basis element of $\preHLieCantuba$ in \eqref{pHeisenLiebasis} is an element of $\torsionLieCantuba$.
\end{lemma}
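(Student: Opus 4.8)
The goal is to show that every basis element listed in \eqref{pHeisenLiebasis} lies in $\torsionLieCantuba$, the Lie subalgebra of $\torsionHeisenCantuba$ generated by $A,B$. I would proceed by exhibiting each such element explicitly as a Lie polynomial in $A,B$, organizing the argument by the type of basis element. The elements $A$ and $B$ are generators, so they are trivially in $\torsionLieCantuba$; the work is with the elements $\LieABcantuba^n$ (for $n\geq 1$ with $n-1\not\equiv 0\bmod p$ when $n\geq 2$), and with $\LieABcantuba^kA^l$ and $B^l\LieABcantuba^k$ (for $k,l\in\Zcantuba^+$ with at least one of $k,l$ not $\equiv 0\bmod p$).

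First I would handle $\LieABcantuba=AB-BA$, which is by definition $\lbrackCantuba A,B\rbrackCantuba\in\torsionLieCantuba$. Next, to get higher powers $\LieABcantuba^n$, the natural device is to compute $\adCantuba A$ or $\adCantuba B$ applied repeatedly to $\LieABcantuba$, or to use commutators of elements already shown to be in $\torsionLieCantuba$. Concretely, using \eqref{LieAsimp}--\eqref{LieBsimp} (or the analogous computations in Section~\ref{structconstSec}), one finds that $\lbrackCantuba\LieABcantuba^m A^l,\LieABcantuba^r\rbrackCantuba$, $\lbrackCantuba\LieABcantuba,\LieABcantuba^kA^l\rbrackCantuba$, etc., produce scalar multiples of elements of the desired shape, and by chaining these one climbs up the exponents. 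The bookkeeping point is that the scalar coefficients that appear — powers of $(1-q^x)$ type quantities and the like — must be nonzero, and this is exactly where the hypotheses $n-1\not\equiv 0\bmod p$ and ``at least one of $k,l$ not $\equiv 0\bmod p$'' enter: these are precisely the conditions under which the relevant structure constants (via \eqref{qmod}, \eqref{qbireduce1}, \eqref{qbireduce2}) are invertible, so one can solve for the target element as a scalar multiple of a commutator of previously constructed elements. I would set up an induction on $l$ (and separately on $k$), with base cases $\lbrackCantuba A,B\rbrackCantuba=\LieABcantuba$, $\lbrackCantuba\LieABcantuba,A\rbrackCantuba$ giving $\LieABcantuba A$ up to scalar, $\lbrackCantuba B,\LieABcantuba\rbrackCantuba$ giving $B\LieABcantuba$ up to scalar, and then push exponents of $A$, of $B$, and of $\LieABcantuba$ one at a time.

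The main obstacle I anticipate is the careful case analysis around the congruence conditions modulo $p$: when an exponent is $\equiv 0\bmod p$ the corresponding relation \eqref{reduceAB} collapses $A^lB^l$ and $B^lA^l$ to the same polynomial in $\LieABcantuba$, which kills certain commutators (this is the whole content of Lemma~\ref{inconstLem} and Corollary~\ref{TheCor}); so the recursion that raises exponents must always route through a commutator in which the ``$q$-factor'' does not vanish. For the powers $\LieABcantuba^n$, the constraint $n-1\not\equiv 0\bmod p$ reflects that $\lbrackCantuba \LieABcantuba^{n-1}A,B\rbrackCantuba$ (or a similar bracket) yields $\LieABcantuba^n$ only when a factor like $\{n\}_q$ or $(1-q^{n-1})$ is nonzero. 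I would therefore, for each target element, write down explicitly one valid bracket expression — choosing, among several possible commutators from Section~\ref{structconstSec}, one whose scalar coefficient is guaranteed nonzero under the stated hypothesis — and then invoke the induction hypothesis on the constituent elements, which have strictly smaller total degree. Once each generator of the spanning set \eqref{pHeisenLiebasis} is realized this way, the lemma follows since $\torsionLieCantuba$ is a linear subspace.
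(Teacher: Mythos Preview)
Your plan is essentially the paper's approach: exhibit each basis element as an explicit iterated-adjoint expression in $A$, $B$, $\LieABcantuba$ and check that the scalar prefactor is invertible under the stated congruence hypotheses. The paper writes this in closed form --- defining $\baseAcantuba(k,l):=((-\adCantuba\LieABcantuba)^k\circ(-\adCantuba A)^{l+1})(B)$, $\baseBcantuba(k,l):=((\adCantuba B)^{l-1}\circ(\adCantuba\LieABcantuba)^k)([B,[B,A]])$, and an auxiliary $\baseGcantuba(k)$ whose telescoped sum gives $\LieABcantuba^{k+2}$ --- rather than as a one-step-at-a-time induction, but the content is the same, and the paper's split into ``$l\not\equiv 0$'' versus ``$k+1\not\equiv 0$'' (handled in \eqref{special1}--\eqref{special2}) is exactly your ``route through a commutator whose $q$-factor does not vanish.''

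One point to flag. Reading Definition~\ref{preLieDef} literally, your plan omits $\LieABcantuba^n$ with $n\geq 2$ and $n-1\equiv 0\bmod p$. The paper's own proof, however, \emph{does} construct these (and Table~\ref{table:LPC} lists every $\LieABcantuba^n$, $n\geq 1$, as a Lie polynomial; Lemma~\ref{LiesubLem} also needs them in $\derLieCantuba$). Your recursion cannot reach $\LieABcantuba^{k+2}$ when $k+1\equiv 0\bmod p$, because the coefficient $(1-q^{k+1})$ you would divide by vanishes. The paper supplies a separate argument here: since $k+1\equiv 0\bmod p$, the elements $A^{k+1}$ and $B^{k+1}$ are central in $\torsionHeisenCantuba$, hence by \eqref{reduceAB} so is $\LieABcantuba^{k+1}$; one then computes directly that $[\LieABcantuba^k A,\, B\LieABcantuba]=\LieABcantuba^{k+2}$, with both bracket arguments already realized as Lie polynomials via \eqref{obaseAdef}--\eqref{obaseBdef}. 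You should add this step (or note explicitly that the definition, as literally stated, does not require it --- though the surrounding results do).
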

\begin{proof} The elements $A,B$, and $\LieABcantuba$ are clearly in $\torsionLieCantuba$. To prove that the other elements in \eqref{pHeisenLiebasis} are in $\torsionLieCantuba$, or equivalently to construct them as Lie polynomials, we borrow some of the constructions in \cite[Section 5]{Can17}. Let $k\in\Ncantuba$ and $l\in\Zcantuba^+$. We define the following elements of $\torsionLieCantuba$.

\begin{eqnarray}
\baseAcantuba(k,l) & := &  \left(\left(-\adCantuba \LieABcantuba\right)^k\circ\left(-\adCantuba A\right)^{l+1}\right)\left( B\right),\\
\baseBcantuba(k,l) & := &  \left(\left(\adCantuba B\right)^{l-1}\circ\left(\adCantuba \LieABcantuba\right)^{k}\right)\left( \lbrackCantuba B,\lbrackCantuba B,A\rbrackCantuba\rbrackCantuba\right),\\
\baseGcantuba(k) & := & \left(\left(\adCantuba B\right)\circ\left(-\adCantuba \LieABcantuba\right)^{k}\right)\left( \lbrackCantuba\lbrackCantuba B,A\rbrackCantuba,A\rbrackCantuba\right).
\end{eqnarray}

The following relations can be derived using the techniques in Section~\ref{structconstSec}.

\begin{eqnarray}
\baseAcantuba(k,l) & = & -(1-q)^l(q^l-1)^k\LieABcantuba^{k+1}A^l,\label{baseArel}\\
\baseBcantuba(k,l) & = & (q-1)^{k+1}(1-q^{k+1})^{l-1}B^l\LieABcantuba^{k+1},\label{baseBrel}\\
\baseGcantuba(k) & = & q^{-k}(q-1)^{k+1}\left( q\{k\}_q\LieABcantuba^{k+1}-\{k+1\}_q\LieABcantuba^{k+2}\right),\label{baseGrel}\\
q^k\sum_{i=0}^k(q-1)^{-(i+1)}\baseGcantuba(i) & = & -\{k+1\}_q\LieABcantuba^{k+2}.\label{baseGrel2}
\end{eqnarray}

The next goal is to isolate elements of the form $\LieABcantuba^{k+1}A^l$, $B^l\LieABcantuba^{k+1}$, and $\LieABcantuba^{k+2}$ in the right-hand sides of \eqref{baseArel}, \eqref{baseBrel}, \eqref{baseGrel2}, and so we define the following.

\begin{eqnarray}
\obaseAcantuba(k,l) \quad := & -(1-q)^{-l}(q^l-1)^{-k}\baseAcantuba(k,l) & = \quad \LieABcantuba^{k+1}A^l,\quad (l{\  \not\equiv\  } 0\mod p),\label{obaseAdef}\\
\obaseBcantuba(k,l) \quad := & (q-1)^{-k-1}(1-q^{k+1})^{1-l}\baseBcantuba(k,l) & = \quad B^l\LieABcantuba^{k+1},\quad  (k+1{\  \not\equiv\  } 0\mod p),\label{obaseBdef}\\
\obaseGcantuba(k) \quad := & \frac{q^k}{1-q^{k+1}}\sum_{i=0}^k(q-1)^{-i}\baseGcantuba(i) & = \quad \LieABcantuba^{k+2},\quad (k+1{\  \not\equiv\  } 0\mod p).\label{obaseGdef}
\end{eqnarray}

With reference to \eqref{obaseAdef}, \eqref{obaseBdef}, we show how to construct $\LieABcantuba^{k+1}A^l$ and $\quad B^l\LieABcantuba^{k+1}$ as Lie polynomials for the cases $l\equiv 0\mod p$ and $k+1\equiv 0\mod p$, respectively. The trick is that in such cases, we have $l-1{\  \not\equiv\  }  0\mod p$ and $k{\  \not\equiv\  } 0\mod p$, respectively, and so by \eqref{obaseAdef}, \eqref{obaseBdef}, the Lie polynomial constructions $\obaseAcantuba(k,l-1)$ and $\obaseBcantuba(k-1,l)$ are possible in each respective case. We use the techniques in Section~\ref{structconstSec} to obtain
\begin{eqnarray}
\lbrackCantuba \obaseAcantuba(k,l-1),A\rbrackCantuba & = & (1-q^{k+1}) \LieABcantuba^{k+1}A^l,\quad (l{\  \equiv\  } 0\mod p,\quad  k+1{\  \not\equiv\  } 0\mod p),\label{special1}\\
\lbrackCantuba \obaseBcantuba(k-1,l),\LieABcantuba\rbrackCantuba & = & (1-q^l) B^l\LieABcantuba^{k+1},\quad  (k+1{\  \equiv\  } 0\mod p,\quad l{\  \not\equiv\  } 0\mod p).\label{special2}
\end{eqnarray}
In \eqref{special1}, the condition $k+1{\  \not\equiv\  } 0\mod p$ is needed. Otherwise, we have a basis element of $\nonLiesubCantuba$ instead of an element in \eqref{pHeisenLiebasis}. This, in turn, ensures that the scalar coefficient $1-q^{k+1}$ is nonzero and our Lie polynomial construction is successful. Similar reasoning applies to \eqref{special2}. What remains to be shown is that, with reference to \eqref{obaseGdef}, how to construct a nested commutator equal to $\LieABcantuba^{k+2}$ when $k+1{\   \equiv\  } 0\mod p$ (where we note that $k\geq 1$ because $p\geq 2$). For this case, by \cite[Corollary 6.12]{Hel}, both $A^{k+1}$ and $B^{k+1}$ are central in $\torsionHeisenCantuba$. Then by \eqref{reduceAB}, so is $\kappa:=\LieABcantuba^{k+1}$. Since $1$ is not congruent to zero modulo $p$, by \eqref{obaseAdef}, \eqref{obaseBdef}, both $\LieABcantuba^{k}A$ and $B\LieABcantuba$ are Lie polynomials in $A,B$. Then so is $\lbrackCantuba \LieABcantuba^{k}A, B\LieABcantuba\rbrackCantuba=\LieABcantuba^kAB\LieABcantuba-B\kappa A=q^{-1}\cdot q\LieABcantuba^{k+1}AB-B\kappa A=\kappa(AB-BA)=\LieABcantuba^{k+2}$. This completes the Lie polynomial constructions needed in the proof.\qed
\end{proof}

\begin{theorem}\begin{enumerate}
\item\label{smallThm} As Lie algebras, $\preHLieCantuba$ and $\torsionLieCantuba$ are isomorphic.
\item\label{smallCor} The derived (Lie) algebra of $\torsionLieCantuba$ is $\derLieCantuba$.
\end{enumerate}
\end{theorem}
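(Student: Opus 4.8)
The plan is to prove first the stronger fact that $\preHLieCantuba$ and $\torsionLieCantuba$ coincide as subsets of $\torsionHeisenCantuba$; the Lie-algebra isomorphism asserted in the first statement is then witnessed by the identity map. For the inclusion $\torsionLieCantuba\subseteq\preHLieCantuba$, I would combine Lemma~\ref{LiesubLem} with bilinearity of the bracket: since every commutator of two basis elements of $\preHLieCantuba$ from \eqref{pHeisenLiebasis} lies in $\derLieCantuba\subseteq\preHLieCantuba$, the subspace $\preHLieCantuba$ is closed under $\lbrackCantuba\cdot,\cdot\rbrackCantuba$, hence is a Lie subalgebra of $\torsionHeisenCantuba$; as $A,B\in\preHLieCantuba$, it contains the Lie subalgebra generated by $A,B$, which is $\torsionLieCantuba$. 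For the reverse inclusion, Lemma~\ref{subsetLem} gives that every basis element of $\preHLieCantuba$ listed in \eqref{pHeisenLiebasis} belongs to $\torsionLieCantuba$, so $\preHLieCantuba\subseteq\torsionLieCantuba$ by linearity. Hence $\preHLieCantuba=\torsionLieCantuba$, which settles the first statement.

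For the second statement I would establish $\lbrackCantuba\torsionLieCantuba,\torsionLieCantuba\rbrackCantuba=\derLieCantuba$ by two inclusions, freely using $\torsionLieCantuba=\preHLieCantuba$. For ``$\subseteq$'', bilinearity shows that $\lbrackCantuba\preHLieCantuba,\preHLieCantuba\rbrackCantuba$ is spanned by the commutators of pairs of basis elements from \eqref{pHeisenLiebasis}, and each of those lies in $\derLieCantuba$ by Lemma~\ref{LiesubLem}; since $\derLieCantuba$ is a subspace, the span is contained in it. For ``$\supseteq$'', it suffices to exhibit each basis element of $\derLieCantuba$ — that is, each element of \eqref{pHeisenLiebasis} other than $A,B$ — as a member of $\lbrackCantuba\torsionLieCantuba,\torsionLieCantuba\rbrackCantuba$, and for this I would inspect the explicit constructions in the proof of Lemma~\ref{subsetLem}: $\LieABcantuba=\lbrackCantuba A,B\rbrackCantuba$; the elements $\LieABcantuba^{k+1}A^l$ and $B^l\LieABcantuba^{k+1}$ are nonzero scalar multiples of $\baseAcantuba(k,l)$ and $\baseBcantuba(k,l)$, or, in the residue-zero cases, of the brackets in \eqref{special1} and \eqref{special2}; and $\LieABcantuba^{k+2}$ is either a linear combination of the $\baseGcantuba(i)$ or, when $k+1\equiv 0\bmod p$, equals $\lbrackCantuba\LieABcantuba^kA,B\LieABcantuba\rbrackCantuba$. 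Each of these expressions is a (possibly iterated) commutator of elements of $\torsionLieCantuba$, so, since $\lbrackCantuba\torsionLieCantuba,\torsionLieCantuba\rbrackCantuba$ is an ideal of $\torsionLieCantuba$ and hence a subspace stable under bracketing with $\torsionLieCantuba$, each lies in $\lbrackCantuba\torsionLieCantuba,\torsionLieCantuba\rbrackCantuba$. This yields $\derLieCantuba\subseteq\lbrackCantuba\torsionLieCantuba,\torsionLieCantuba\rbrackCantuba$ and therefore equality.

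Since the substantive work has already been done in Lemmas~\ref{LiesubLem} and \ref{subsetLem}, there is no deep obstacle remaining; the only point that needs care is the ``$\supseteq$'' half of the second statement. There I must verify that every construction used to realize an element of \eqref{pHeisenLiebasis} as a Lie polynomial genuinely begins from a commutator, so that it lands in the derived algebra rather than merely in $\torsionLieCantuba$, and that these constructions jointly exhaust the basis of $\derLieCantuba$ across all residue cases $l\equiv 0$ versus $l\not\equiv 0$ and $k+1\equiv 0$ versus $k+1\not\equiv 0$ modulo $p$, with the relevant scalar coefficients (such as $1-q^{k+1}$ and $1-q^{l}$ appearing in \eqref{special1} and \eqref{special2}, and $1-q^{k+1}$ in \eqref{obaseGdef}) nonzero in each case. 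This is careful bookkeeping rather than a genuine difficulty.
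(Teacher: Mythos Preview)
Your proof is correct and follows essentially the same route as the paper: show $\preHLieCantuba=\torsionLieCantuba$ via Lemmas~\ref{LiesubLem} and~\ref{subsetLem}, then derive the second statement. Your treatment of part~\ref{smallCor} is in fact more detailed than the paper's one-line ``use Lemma~\ref{LiesubLem} and Theorem~\ref{smallThm}''; your explicit verification of the inclusion $\derLieCantuba\subseteq\lbrackCantuba\torsionLieCantuba,\torsionLieCantuba\rbrackCantuba$ by inspecting the commutator constructions in the proof of Lemma~\ref{subsetLem} makes visible a step the paper leaves implicit.
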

\begin{proof} By Lemmas~\ref{LiesubLem} and \ref{subsetLem}, $\preHLieCantuba$ is a Lie subalgebra of $\torsionHeisenCantuba$ that contains $A,B$ and is contained in $\torsionLieCantuba$. Therefore, $\preHLieCantuba=\torsionLieCantuba$. To prove \ref{smallCor}, use Lemma~\ref{LiesubLem} and Theorem~\ref{smallThm}.\qed
\end{proof}

\subsection{Lie polynomials in each $\Zcantuba$-gradation subspace: a summary}

We now characterize the Lie polynomials in each of the $\Zcantuba$-gradiation subspaces in $\{\gradsubCantuba_n\  :\  n\in\Zcantuba\}$.  For each gradation subspace $\gradsubCantuba_n$, we simply identify which basis elements of $\gradsubCantuba_n$ described in the end of Section~\ref{structconstSec} are among the defining basis elements of the Lie algebra  $\preHLieCantuba=\torsionLieCantuba$ from Definition~\ref{preLieDef}. We write in concise form all the nested commutator constructions in the proof of Lemma~\ref{subsetLem}. Given $k\in\Ncantuba$ and $n,l\in\Zcantuba^+$, these nested commutators are

\begin{eqnarray}
\LieABcantuba^{k+2} & = & \frac{-q^k(1-q)}{1-q^{k+1}}\sum_{i=0}^k\frac{\left(\left(\adCantuba B\right)\circ\left(-\adCantuba \LieABcantuba\right)^{k}\right)\left( \lbrackCantuba\lbrackCantuba B,A\rbrackCantuba,A\rbrackCantuba\right)}{(q-1)^{1+i}},\quad  (k+1{\  \not\equiv\  } 0\mod p),\label{combase1}\\
\LieABcantuba^{k+2} & = & \lbrackCantuba\frac{\left(\left(-\adCantuba \LieABcantuba\right)^{k-1}\circ\left(-\adCantuba A\right)^{2}\right)\left( B\right)}{(q-1)^k},\frac{ \lbrackCantuba B,\lbrackCantuba B,A\rbrackCantuba\rbrackCantuba}{q-1}\rbrackCantuba,\quad\quad\    ( k+1{\  \equiv\  } 0\mod p),\label{combase1a}\\
\LieABcantuba^{k+1}A^l & = & -\frac{\left(\left(-\adCantuba \LieABcantuba\right)^k\circ\left(-\adCantuba A\right)^{l+1}\right)\left( B\right)}{(1-q)^l(q^l-1)^k}, \quad ( l{\  \not\equiv\  } 0\mod p),\label{combase2}\\
\LieABcantuba^{k+1}A^{np} & = & \frac{(1-q)^{1-np}}{(1-q^{k+1})(q^{np-1}-1)^k}\left((\adCantuba A)\circ\left(-\adCantuba \LieABcantuba\right)^k\circ\left(-\adCantuba A\right)^{np}\right)\left( B\right),\    (k+1{\  \not\equiv\  } 0\mod p),\label{combase3}\\
B^l\LieABcantuba^{k+1} & = & \frac{\left(\left(\adCantuba B\right)^{l-1}\circ\left(\adCantuba \LieABcantuba\right)^{k}\right)\left( \lbrackCantuba B,\lbrackCantuba B,A\rbrackCantuba\rbrackCantuba\right)}{(q-1)^{k+1}(1-q^{k+1})^{l-1}},\quad  (k+1{\  \not\equiv\  } 0\mod p),\label{combase4}\\
B^{l}\LieABcantuba^{np} & = & \frac{ \left((-\adCantuba\LieABcantuba)\circ\left(\adCantuba B\right)^{l-1}\circ\left(\adCantuba \LieABcantuba\right)^{np-2}\right)\left( \lbrackCantuba B,\lbrackCantuba B,A\rbrackCantuba\rbrackCantuba\right)}{(1-q^l)(q-1)^{np-1}(1-q^{np-1})^{l-1}}, \quad ( l{\  \not\equiv\  } 0\mod p).\label{combase5}
\end{eqnarray}
We note that in \eqref{combase1a}, the composition of maps $(-\adCantuba \LieABcantuba)^{k-1}$ is well defined. We cannot have $k=0$ in this case. Otherwise, we have $1{\  \equiv\  } 0\mod p$, which is not possible because $p\geq 2$. The Lie polynomial characterizations per gradation subspace are summarized in Table~\ref{table:LPC}.\\

\begin{center}
\scalebox{0.73}{
\begin{tabular}{|c|c|c|c|}
\hline
$\Zcantuba$-gradation subspace &  Basis elements & Basis elements that are NOT Lie polynomials in $A,B$ & Basis elements that are Lie polynomials in $A,B$ \\
\hline
$\gradsubCantuba_0$  & $\LieABcantuba^k$ ($k\in\Ncantuba$) & $\algIcantuba$ & $\LieABcantuba$, \eqref{combase1}, \eqref{combase1a} \\
\hline
$\gradsubCantuba_{-1}$  & $\LieABcantuba^kA$ ($k\in\Ncantuba$) & (none) & $A$, \eqref{combase2} ($l=1$) \\
\hline
$\gradsubCantuba_{-l}$\quad  ($l\geq 2$, $l{\  \not\equiv\  } 0 \mod p$)  & $\LieABcantuba^kA^l$ ($k\in\Ncantuba$) & $A^l$  & \eqref{combase2}  \\
\hline
$\gradsubCantuba_{-np}$\quad ($n\in\Zcantuba^+$)  & $\LieABcantuba^kA^{np}$ ($k\in\Ncantuba$)  &  $\LieABcantuba^hA^{np}$,  $(h\equiv0\mod p)$ & \eqref{combase3}  \\
\hline
$\gradsubCantuba_{1}$  & $B\LieABcantuba^k$ ($k\in\Ncantuba$) & (none) & $B$, \eqref{combase4} ($l=1$) \\
\hline
$\gradsubCantuba_{l}$\quad ($l\geq 2$, $l{\  \not\equiv\  } 0 \mod p$ )  & $B^l\LieABcantuba^k$ ($k\in\Ncantuba$) &  $B^l$ & \eqref{combase4}  \\
\hline
$\gradsubCantuba_{np}$\quad ($n\in\Zcantuba^+$)  & $B^{np}\LieABcantuba^k$ ($k\in\Ncantuba$)  & $B^{np}\LieABcantuba^h$,  $(h\equiv0\mod p)$ & \eqref{combase5}  \\
\hline
\end{tabular}}
\captionof{table}{Lie polynomial characterization for each $\Zcantuba$-gradation subspace of $\torsionHeisenCantuba$\\}\label{table:LPC}
\end{center}
Let $m\in\Zcantuba$. The information from Table~\ref{table:LPC} about each basis element is enough to characterize whether an arbitrary $u\in\gradsubCantuba_m$ is a Lie polynomial in $A,B$ or not. Write $u$ as a linear combination $u=\sum_{i=1}^tc_i\beta_i$ where, for any $i$, $c_i\neq 0$ and $\beta_i$ is a basis element of $H_m$ from the second column of Table~\ref{table:LPC}. If exactly one $\beta_j$ is not a Lie polynomial in $A,B$, then so is $u$. Otherwise, $c_j^{-1}\left(u-\sum_{i\neq j}^tc_i\beta_i\right)=\beta_j$ is a Lie polynomial in $A,B$, which is a contradiction. This argument can be extended to any case when there is more than one element of $\{\beta_i\  :\  i\in\{1,2,\ldots,t\}\}$ that is not a Lie polynomial in $A,B$.

The study of Lie polynomials, such as that done in \cite{Can17} and in this work, which are in associative algebras defined by generators and relations where the relations are not all Lie polynomials in the generators was begun in the study \cite{Can} for another algebra in which the Lie polynomials have not been fully characterized. As the study \cite{Can17} and this work show, the $q$-deformed Heisenberg commutation relation $AB-qBA=\algIcantuba$ implies a rich structure and results about the Lie polynomials in $A,B$. The authors hope that to the interested reader, who has been studying other associative algebras with more varied presentations, may find this kind of study on such other algebras worth considering.

\begin{acknowledgement}
The authors are grateful to the International Mathematical Union for support of their research and Rafael Cantuba's visit to M\"alardalen University in Autumn 2018, and to the Mathematics and Applied Mathematics research environment, the School of Education, Culture and Communication at M\"alardalen University for support and excellent environment for research in Mathematics, and also for the support from De~La Salle University, Manila.
\end{acknowledgement}

\newpage

\end{document}